\theoremstyle{plain}
\newtheorem{theorem}{Theorem}[section]
\newtheorem{corollary}[theorem]{Corollary}
\newtheorem{lemma}[theorem]{Lemma}
\newtheorem{proposition}[theorem]{Proposition}
\theoremstyle{definition}
\newtheorem{remark}[theorem]{Remark}
\numberwithin{equation}{section}
\numberwithin{figure}{section}
\numberwithin{table}{section}
\tikzstyle{fleche}=[>=stealth', postaction={decorate}, thick]
\tikzstyle{axis}=[->, >=stealth', thick, gray]
\tikzstyle{path}=[->, >=stealth', thick]
\tikzstyle{grille}=[dotted, gray]
\colorlet{gray}{white!85!black}
\colorlet{red}{white!30!red}
\colorlet{blue}{white!75!blue}
\begin{document}
\title{\Large Skew odd orthogonal characters and interpolating Schur polynomials}
\author{Naihuan Jing}
\address{Department of Mathematics, North Carolina State University, Raleigh, NC 27695, USA}
\email{jing@ncsu.edu}
\author{Zhijun Li$^{\dagger}$ }
\address{School of Science, Huzhou University, Huzhou, Zhejiang 313000, China}
\email{zhijun1010@163.com}
\author{Danxia Wang}
\address{School of Science, Huzhou University, Huzhou, Zhejiang 313000, China}
\email{dxwangmath@126.com}
\author{Chang Ye, \MakeLowercase{and appendix with} Xinyu Pan}
\address{School of Science, Huzhou University, Huzhou, Zhejiang 313000, China}
\email{yechang@zjhu.edu.cn}
\address{School of Science, Huzhou University, Huzhou, Zhejiang 313000, China}
\email{pxypanxinyu@163.com}
\thanks{{\scriptsize
\hskip -0.6 true cm MSC (2020): Primary: 05E05; Secondary: 17B37.
\newline Keywords: Skew odd orthogonal characters, Jacobi--Trudi identity, $BD$-interpolating Schur polynomials, Vertex operators.
\newline $^\dag$ Corresponding author: zhijun1010@163.com
}}
\maketitle
\begin{abstract}
We introduce two vertex operators to realize skew odd orthogonal characters $so_{\lambda/\mu}(x^{\pm})$ and derive
the Cauchy identity for the skew characters via Toeplitz-Hankel-type determinant similar to the Schur functions.
The method also gives new proofs of the Jacobi--Trudi identity and Gelfand--Tsetlin patterns for $so_{\lambda/\mu}(x^{\pm})$.
Moreover, combining the vertex operators related to characters of types $C,D$ (\cite{Ba1996,JN2015}) and the new vertex operators related to $B$-type characters, we obtain three families of symmetric polynomials that interpolate among characters of $SO_{2n+1}(\mathbb{C})$, $SO_{2n}(\mathbb{C})$
and $Sp_{2n}(\mathbb{C})$,
Their transition formulas are also explicitly given among symplectic and/or orthogonal characters and odd orthogonal characters.
\end{abstract}
\section{introduction}
It is well known that the characters of irreducible representations of the general linear group $GL_n(\mathbb{C})$ are the Schur functions $s_\lambda(x)$ indexed by partitions $\lambda=(\lambda_1,\dots,\lambda_n)$ and form an orthonormal basis of the ring of symmetric functions in the variables $x :=(x_1,\dots,x_n)$ \cite{Mac1995}. One of us \cite{Jing1991} used the vertex operator realization of the Schur functions to study symmetric functions in infinitely many variables, and the same method was extended to other types in \cite{JN2015}. The skew Schur functions $s_{\lambda/\mu}(x)$ indexed by skew partitions $\lambda/\mu$ are a fundamental family of symmetric functions that can be used to study restriction
to subrepresentations. To be precise, the skew Schur functions obey the general branching rule \cite[(5.9)]{Mac1995}
\begin{equation}
s_\lambda(x_1,\dots,x_n)=\sum_{\mu\subset\lambda}s_\mu(x_1,\dots,x_{n-k})s_{\lambda/\mu}(x_{n-k+1},\dots,x_n),
\end{equation}
where $\mu=(\mu_1,\dots,\mu_{n-k})$ are partitions and skew shapes $\lambda/\mu$ subject to $\mu_i\leq \lambda_i$.

Irreducible characters of the classical groups ($Sp_{2n}(\mathbb{C})$, $SO_{2n}(\mathbb{C})$ and $SO_{2n+1}(\mathbb{C})$) are known as the symplectic characters $sp_\lambda(x^{\pm})$, (even) orthogonal characters $o_\lambda(x^{\pm})$ and odd orthogonal characters $so_\lambda(x^{\pm})$ when expressed
as symmetric Laurent functions in $x^{\pm} :=(x_1,x^{-1}_1\dots,x_n,x^{-1}_{n})$, where $\lambda=(\lambda_1,\dots,\lambda_n)$ are the partitions indexing the representations \cite{FH1991}. All aforementioned functions admit determinantal expressions, such as Jacobi--Trudi identities and dual Jacobi--Trudi identities. Baker \cite{Ba1996} and Jing-Nie \cite{JN2015} constructed the vertex operator realizations for the symplectic and orthogonal characters. In \cite{JLW2024}, the authors used the vertex algebraic method to derive the combinatorial structures (Jacobi--Trudi formulas, Gelfand--Tsetlin patterns and Cauchy identities) for the skew symplectic characters $sp_{\lambda/\mu}(x^{\pm})$ and skew (even) orthogonal characters $o_{\lambda/\mu}(x^{\pm})$ (or type $D$), and showed they obey the following general branching rules
\begin{align}\label{e:JT-1}
sp_\lambda(x^{\pm};y^{\pm})&= \sum_{\mu=(\mu_1,\dots,\mu_{n-k})}sp_\mu(x^{\pm})sp_{\lambda/\mu}(y^{\pm}),\\ \label{e:JT-2}
o_\lambda(x^{\pm};y^{\pm})&= \sum_{\mu=(\mu_1,\dots,\mu_{n-k})}o_\mu(x^{\pm})o_{\lambda/\mu}(y^{\pm}),
\end{align}
where $x^{\pm}=(x_1,x^{-1}_1\dots,x_{n-k},x^{-1}_{n-k})$ and $y^{\pm}=(y_1,y^{-1}_1\dots,y_{k},y^{-1}_{k})$.
More recently Albion, Fischer, H\"ongesberg, and Schreier-Aigner \cite[(3a),(3c)]{AFHS2023} have given combinatorial proofs for the dual Jacobi--Trudi formulas associated with our formulas corresponding to \eqref{e:JT-1}-\eqref{e:JT-2} in terms of the elementary symmetric functions, in addition they also obtained the dual Jacobi-Trudi
formulas for the orthogonal characters of odd case.

In this paper, we introduce two new vertex operators for the {\it odd orthogonal characters} $so_\lambda(x^{\pm})$ (or type $B$) and show that they
can be used to investigate the remaining `odd' case--vertex operator realizations for the irreducible characters of the classical group $\mathrm{SO}_{2n+1}(\mathbb{C})$. We will show that the generated skew symmetric functions match with Abion et al's formula \cite[4(b)]{AFHS2023}. Using the same methodology in \cite{JLW2024}, we obtain the combinatorial identities for skew odd orthogonal characters $so_{\lambda/\mu}(x^{\pm})$ as well as the general branching rules
\begin{eqnarray}
so_\lambda(x^{\pm};y^{\pm})=\sum_{\mu=(\mu_1,\dots,\mu_{n-k})}so_\mu(x^{\pm})so_{\lambda/\mu}(y^{\pm}).
\end{eqnarray}
We also find three Toeplitz or Hankel-type determinants related to Schur functions (see Theorem \ref{e:th1}).

Recently Bisi and Zygouras \cite{BZ2022} have found $CB$ (resp. $DB$)-interpolating Schur polynomials $s^{CB}_\lambda(x;\alpha)$ (resp. $s^{DB}_\lambda(x;\alpha)$), which specialize to characters of type $C$ and $B$ (resp. $D$ and $B$), and obtained unified identities and transition formulas between the characters of the classical groups. We will construct three families of interpolating Schur polynomials $s^{BD}_\lambda(x;\alpha)$, $s^{BC}_\lambda(x;\alpha)$ and $s^{CD}_\lambda(x;\alpha)$ by vertex operators and show that
\begin{align*}
&s^{BD}_\lambda(x;0):=so_\lambda(x^{\pm}),\qquad\qquad\qquad s^{BD}_\lambda(x;1):=o_\lambda(x^{\pm}),\\
&s^{BC}_\lambda(x;0):=so_\lambda(x^{\pm}),\qquad\qquad\qquad s^{BC}_\lambda(x;-1):=sp_\lambda(x^{\pm}),\\
&s^{CD}_\lambda(x;0):=sp_\lambda(x^{\pm}),\qquad\qquad\qquad s^{CD}_\lambda(x;1):=o_\lambda(x^{\pm}).
\end{align*}
Along the way we also derive unified identities and transition formulas among the irreducible characters of the classical groups.

This paper is organized as follows. In Section 2, we introduce the vertex operators for the odd orthogonal characters.
In section 3, we show that the vertex operator realization derives the Jacobi--Trudi identity, Gelfand--Tsetlin patterns and Cauchy-type identity for skew odd orthogonal characters. In particular, some Toeplitz-Hankel-type determinants related to Schur functions are also given. In Section 4, we
obtain three families of interpolating Schur polynomials and discuss the transition
formulas among symplectic characters, orthogonal characters and odd orthogonal characters. In the Appendix, we show how to get dual Jacobi--Trudi identity for skew odd orthogonal characters through vertex operators.

\section{Preliminaries}
Let $\mathcal{H}$ be the Heisenberg algebra generated by $\{a_n|n\neq 0\}$ with the central element $c=1$ subject to the commutation relations\cite{FK1980}
\begin{align}\label{e:he1}
[a_m,a_n]=m\delta_{m,-n}c,~~~~[a_n,c]=0.
\end{align}
The Fock space $\mathcal{M}$ (resp. $\mathcal{M}^*$) is generated by the vacuum vector $|0\rangle$ (resp. dual vacuum vector $\langle0|$) and
subject to
\begin{align*}
a_n|0\rangle=\langle0|a_{-n},~~n>0.
\end{align*}

 We define the following vertex operators
\begin{equation}
\begin{aligned}\label{e:so}
U(z)&=(1+z)\exp\left(\sum^\infty_{n=1}\frac{a_{-n}}{n}z^n\right)\exp\left(-\sum^\infty_{n=1}\frac{a_n}{n}(z^{n}+z^{-n})\right)=\sum_{n\in \mathbb{Z}}U_nz^{-n},\\
U^*(z)&=(1-z)\exp\left(-\sum^\infty_{n=1}\frac{a_{-n}}{n}z^n\right)\exp\left(\sum^\infty_{n=1}\frac{a_n}{n}(z^{n}+z^{-n})\right)=\sum_{n\in \mathbb{Z}}U^*_nz^{n}.
\end{aligned}
\end{equation}
It is easy to check that
\begin{align}
\label{e:ac10}&U_n|0\rangle=U^*_{-n}|0\rangle=0,~~n>0,\\
\label{e:ac1}&\langle 0|U_n=\langle 0|U_{-n-1},~~\langle 0|U^*_n=-\langle 0|U^*_{-n+1}.
\end{align}

One can obtain generalized Clifford
algebra with the help of the Baker-Campbell-Hausdorff (BCH) formula
\begin{align*}
\exp(A)\exp(B)=\exp(B)\exp(A)\exp([A,B]),~~~~~~~~~~~[[A,B],A]=[[A,B],B]=0.
\end{align*}
\begin{proposition}
Operators $U_i,U^*_i$ satisfy the commutation relations
\begin{align}
\label{e:com5}U_iU_j+U_{j+1}U_{i-1}=0,\qquad U^*_iU^*_j+U^*_{j-1}U^*_{i+1}=0,\qquad U_iU^*_j+U^*_{j+1}U_{i+1}=\delta_{i,j}.
\end{align}
\end{proposition}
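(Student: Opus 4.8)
The plan is to derive the three relations directly from the vertex operator product expansions, using the standard technique of computing $U(z)U(w)$, $U^*(z)U^*(w)$ and $U(z)U^*(w)$ via the BCH formula and then extracting Fourier coefficients. First I would compute the operator products. Writing $U(z) = (1+z)\,\Gamma_-(z)\,\Gamma_+(z)$ where $\Gamma_-(z) = \exp\bigl(\sum_{n\ge 1}\tfrac{a_{-n}}{n}z^n\bigr)$ and $\Gamma_+(z) = \exp\bigl(-\sum_{n\ge 1}\tfrac{a_n}{n}(z^n+z^{-n})\bigr)$, the only nontrivial commutator is between $\Gamma_+(z)$ and $\Gamma_-(w)$, and using $[a_m,a_n]=m\delta_{m,-n}$ one gets a scalar factor. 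For $U(z)U(w)$ the normal-ordering constant works out to a rational function whose expansion produces a relation of the form $U(z)U(w) = -\tfrac{w}{z}\,U(w)U(z)$ after accounting for the $(1+z)(1+w)$ prefactors and the $z^{-1},w^{-1}$ terms in $\Gamma_+$; similarly $U^*(z)U^*(w)$ and the mixed product $U(z)U^*(w)$ will each yield a functional equation in $z,w$, the last one having a delta-function term $\delta(z/w)$ or equivalently a simple pole giving the $\delta_{i,j}$ on the right.

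The key steps, in order, are: (1) record the contraction formulas — specifically, $\Gamma_+(z)\Gamma_-(w) = \tfrac{1}{1-w/z}\cdot\tfrac{1}{1-wz}\,\Gamma_-(w)\Gamma_+(z)$ or the appropriate analogue coming from the $(z^n+z^{-n})$ in the exponent, and likewise for the starred operators; (2) multiply in the scalar prefactors $(1\pm z)(1\pm w)$ and simplify to obtain clean functional equations $U(z)U(w) = R(z,w)\,U(w)U(z)$, $U^*(z)U^*(w) = R^*(z,w)\,U^*(w)U^*(z)$, and $U(z)U^*(w) = S(z,w)\,U^*(w)U(z) + (\text{delta term})$, where $R, R^*, S$ are explicit simple rational functions; (3) expand each functional equation as a formal power series in the appropriate region and equate coefficients of $z^{-i}w^{-j}$ (using the mode expansions $U(z)=\sum U_n z^{-n}$, $U^*(z)=\sum U^*_n z^n$) to read off \eqref{e:com5}. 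For instance, $U(z)U(w) = -\tfrac{w}{z}U(w)U(z)$ gives $\sum_{i,j}U_iU_j z^{-i}w^{-j} = -\sum_{i,j}U_jU_i z^{-i-1}w^{-j+1}$; shifting indices $i\to i-1$, $j\to j+1$ on the right yields $U_iU_j = -U_{j+1}U_{i-1}$, which is the first relation. The other two relations follow analogously, with the mixed case producing the Kronecker delta from the residue of the pole.

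The main obstacle I expect is getting the contraction with the two-sided exponent $\exp\bigl(-\sum \tfrac{a_n}{n}(z^n+z^{-n})\bigr)$ exactly right: the $z^{-n}$ half of the exponent contributes an extra factor of $(1-wz)^{-1}$-type (with the variables \emph{multiplied} rather than divided), and keeping careful track of which region each geometric series is expanded in — $|z|>|w|$ versus $|zw|<1$ — is where sign errors and spurious poles creep in. One must check that after multiplying by $(1+z)(1+w)$ the genuinely singular factors cancel down to exactly the simple rational functions $-w/z$, its inverse, etc., and that in the mixed product the leftover pole is at $z=w$ with residue $1$, so that extracting coefficients gives precisely $\delta_{i,j}$ and not some shifted or rescaled Kronecker symbol. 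Once the functional equations are pinned down, step (3) is a routine bookkeeping exercise in reindexing.
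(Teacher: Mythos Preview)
Your proposal is correct and follows essentially the same approach as the paper: compute the operator products via the BCH formula, obtain functional equations in $z,w$, and extract Fourier coefficients. The paper carries this out explicitly only for the mixed relation, writing $\frac{1}{z}U(z)U^*(w)+\frac{1}{w}U^*(w)U(z)=\delta(z-w)$ and then taking coefficients; one small simplification you may note is that for $U(z)U(w)$ and $U^*(z)U^*(w)$ the contraction factor $(1-zw)(1-w/z)$ appears in the \emph{numerator}, so no geometric-series expansion or region-of-convergence issue arises there---the identity $U(z)U(w)=-\tfrac{w}{z}U(w)U(z)$ holds on the nose as Laurent polynomials, and only the mixed product has genuine poles producing the delta function.
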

\begin{proof} Since the three relations are proved similarly, we consider the last equation and let
\begin{align*}
:U(z)U^*(w):&=(1+z)(1-w)\exp\left(\sum^\infty_{n=1}\frac{a_{-n}}{n}(z^n-w^n)\right)\exp\left(-\sum^\infty_{n=1}\frac{a_n}{n}(z^{n}+z^{-n}-w^{n}-w^{-n})\right).
\end{align*}
By the BCH formula,
\begin{align*}
U(z)U^*(w)&=\frac{1}{(1-\frac{w}{z})(1-zw)}:U(z)U^*(w):,\\
U^*(w)U(z)&=\frac{1}{(1-\frac{z}{w})(1-zw)}:U(z)U^*(w):.
\end{align*}
Thus we have
\begin{align*}
\frac{1}{z}U(z)U^*(w)+\frac{1}{w}U^*(w)U(z)&=\delta(z-w),
\end{align*}
where $\delta(z-w)=\sum_{n\in \mathbb{Z}}z^{-n-1}w^n$. Taking coefficients one can finish the proof.
\end{proof}
A {\it generalized partition} $\lambda=(\lambda_1,\lambda_2,\dots,\lambda_l)$ of weight $|\lambda|=\sum_i\lambda_i$
is a set of weakly decreasing nonnegative integers\footnote{Recall that a partition is a set of weakly decreasing positive integers. Thus a partition is a special case of generalized partitions.}. Nonzero $\lambda_i$ are called the parts of $\lambda$, and the length of $\lambda$ is the number of parts, denoted by $l(\lambda)$. The conjugate partition $\lambda^{\prime}$ is $(\lambda^{\prime}_1,\dots,\lambda^{\prime}_m)$ where $\lambda^{\prime}_i$ is the number of $j$ such that $\lambda_j\geq i$. A generalized partition $\lambda=(\lambda_1,\dots,\lambda_{l})$ is said to {\it interlace} the generalized partition $\mu=(\mu_1,\dots,\mu_{l-1})$, written as $\mu\prec\lambda$, if $\lambda_{i+1}\leq \mu_i\leq\lambda_i$ for $1\leq i\leq l-1$.
For a generalized partition $\lambda=(\lambda_1,\lambda_2,\dots,\lambda_l)$, let
\begin{align}
|\lambda^{so}\rangle=U_{-\lambda_1}U_{-\lambda_2}\cdots U_{-\lambda_l}|0\rangle, \qquad\langle \lambda^{so}|=\langle 0|U^*_{-\lambda_l}\cdots U^*_{-\lambda_1}.
\end{align}
Let $\widetilde{\mathcal{M}^*}$ be the completion of $\mathcal{M}^*$ defined by
 the filtration $\widetilde{\mathcal{M}^*}^{N}$=span$\{\langle 0|U^*_{-\nu_N}\cdots U^*_{-\nu_1}| \nu_1\geq \dots\geq \nu_N\geq 0, \nu_i\in \mathbb Z\}$.

Note that $\langle \lambda^{so}|$ belongs to $\widetilde{\mathcal{M}^*}$ rather than $\mathcal{M}^*$. For example, $\langle 0|U^*_{0}U^*_{-1}U^*_{-4}\neq \langle 0|U^*_{-1}U^*_{-4}$ due to $\langle 0|U^*_0\neq \langle 0|$. Therefore the partition element $\langle 0|U^*_{0}U^*_{-1}U^*_{-4}$ and $\langle 0|U^*_{-1}U^*_{-4}$ are different in $\widetilde{\mathcal{M}^*}$.

Note that the condition \eqref{e:ac10} (resp. \eqref{e:com5}) agrees with \cite[(2.3)]{JLW2024} (resp. \cite[Proposition 3.6]{JN2015} or \cite[(2.6)]{JLW2024}).
Using the same process in \cite[Th. 2.3]{JLW2024}, we have the following orthogonality relation.
\begin{proposition}\label{pro5}
For two generalized partitions $\mu=(\mu_1,\dots,\mu_l)$ and $\lambda=(\lambda_1,\dots,\lambda_l)$, one has that
\begin{align}\label{e:ortho1}
\langle \mu^{so}|\lambda^{so}\rangle=\delta_{\lambda\mu},
\end{align}
where $\langle u|v\rangle$ denotes $\langle u||v\rangle$. Here some parts of $\lambda$ and $\mu$ can be zeros and $\langle 0|1|0\rangle=1$.
\end{proposition}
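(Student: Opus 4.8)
The plan is to mimic the proof of \cite[Th.~2.3]{JLW2024}, reducing the orthogonality statement to a normal-ordering computation for the generating series $U(z)$ and $U^*(w)$. First I would set up the bra-ket $\langle \mu^{so}|\lambda^{so}\rangle = \langle 0|U^*_{-\mu_l}\cdots U^*_{-\mu_1}U_{-\lambda_1}\cdots U_{-\lambda_l}|0\rangle$ and interpret it as the constant term (in all variables) of
\begin{equation*}
\langle 0| U^*(w_l)\cdots U^*(w_1) U(z_1)\cdots U(z_l)|0\rangle
\end{equation*}
after extracting the coefficient of $w_l^{-\mu_l}\cdots w_1^{-\mu_1} z_1^{-\lambda_1}\cdots z_l^{-\lambda_l}$, taking into account the sign conventions in the definitions \eqref{e:so}. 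Using the BCH/contraction formulas already recorded in the proof of Proposition~\ref{e:com5} (namely $U(z)U^*(w) = \frac{1}{(1-w/z)(1-zw)}:U(z)U^*(w):$ and the analogous formulas for $U^*U^*$ and $UU$), together with $\langle 0|\exp(\sum a_{-n}z^n/n) = \langle 0|$ and $\exp(-\sum a_n(\cdots)/n)|0\rangle = |0\rangle$, the vacuum expectation collapses to a product of the scalar contraction factors:
\begin{equation*}
\langle 0| U^*(w_l)\cdots U^*(w_1) U(z_1)\cdots U(z_l)|0\rangle = \prod_{i}(1+z_i)(1-w_i)\prod_{i<j}(\cdots)\prod_{i,j}\frac{1}{(1-w_i z_j)(1-z_j/w_i)}\cdots,
\end{equation*}
where I will need to be careful about the precise form of the cross-terms among the $z$'s, among the $w$'s, and between $z$'s and $w$'s.

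Next I would recognize this rational generating function as (a signed/variable-changed version of) the Cauchy-type kernel whose coefficient expansion is governed by the combinatorics of semistandard tableaux, exactly as in the Schur/skew-Schur case. The cleanest route is probably induction on $l$: peel off the outermost operators $\langle 0|U^*_{-\mu_l}$ on the left and $U_{-\lambda_l}|0\rangle$ on the right, use the commutation relations \eqref{e:com5} to move $U^*_{-\mu_l}$ past the string of $U_{-\lambda_i}$'s, and use the boundary relations \eqref{e:ac10} and \eqref{e:ac1} to kill or rewrite terms hitting the vacuum; the relation $U_iU^*_j + U^*_{j+1}U_{i+1} = \delta_{i,j}$ is what produces the Kronecker delta, while $\langle 0|U^*_n = -\langle 0|U^*_{-n+1}$ and $\langle 0|U_n = \langle 0|U_{-n-1}$ control the terms that would otherwise spoil the bookkeeping. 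This is the same inductive skeleton as \cite[Th.~2.3]{JLW2024}, so I would explicitly invoke that reference for the structure and only indicate the modifications forced by the present (type $B$) vertex operators.

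The main obstacle I anticipate is the appearance of zero parts: since $\langle\lambda^{so}|$ genuinely lives in the completion $\widetilde{\mathcal{M}^*}$ and $\langle 0|U^*_0 \neq \langle 0|$, one cannot naively discard trailing $U^*_0$'s or $U_0$'s, and the orthogonality must be proved with the zero parts retained. Concretely, when $\lambda$ and $\mu$ have different numbers of nonzero parts (but the same total length $l$), the matching of the trailing $U_0$ and $U^*_0$ factors against the commutation and boundary relations has to be done with care so as not to accidentally identify $\langle 0|U^*_0U^*_{-1}U^*_{-4}$ with $\langle 0|U^*_{-1}U^*_{-4}$. I expect this to be handled exactly as the analogous point is in \cite{JLW2024}: the extra $U_0/U^*_0$ factors contribute consistently on both sides, the delta $\delta_{i,j}$ in \eqref{e:com5} still forces $\lambda_k = \mu_k$ for every index including those equal to zero, and the induction goes through. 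Finally I would note that $\langle 0|1|0\rangle = 1$ supplies the base case $l=0$, completing the proof.
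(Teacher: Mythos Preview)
Your proposal is correct and matches the paper's approach: the paper simply observes that the vacuum conditions \eqref{e:ac10} and the commutation relations \eqref{e:com5} coincide \emph{verbatim} with \cite[(2.3)]{JLW2024} and \cite[(2.6)]{JLW2024}, so the inductive argument of \cite[Th.~2.3]{JLW2024} carries over without change. The only refinement worth noting is that no ``modifications forced by the present (type $B$) vertex operators'' are actually needed---the type-$B$ boundary relations \eqref{e:ac1} for $\langle 0|U_n$ and $\langle 0|U^*_n$ play no role in this particular orthogonality, since the proof only uses the right-vacuum annihilation \eqref{e:ac10} and the anticommutators \eqref{e:com5}; your generating-function discussion is therefore more machinery than the argument requires.
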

We remark that though $|\lambda^{so}\rangle$ only depends on its nonzero parts, it has only one dual element in
the completed dual $\widetilde{\mathcal{M}^*}$ with fixed length.
 For example, the dual element of $U_{-4}U_{-1}|0\rangle$ in $\widetilde{\mathcal{M}^*}^{2}$ is $\langle 0|U^*_{-1}U^*_{-4}$, while in $\widetilde{\mathcal{M}^*}^{4}$ is $\langle 0|U^*_{0}U^*_{0}U^*_{-1}U^*_{-4}$.
\begin{corollary}For a generalized partition $\mu=(\mu_1,\dots,\mu_l)$,
\begin{align}\label{e:identity}
\langle \mu^{so}|\sum_{\eta=(\eta_1,\dots,\eta_l)}|\eta^{so}\rangle\langle \eta^{so}|=\langle \mu^{so}|,~~~~\sum_{\eta=(\eta_1,\dots,\eta_l)}|\eta^{so}\rangle\langle \eta^{so}||\mu^{so}\rangle\langle=|\mu^{so}\rangle,
\end{align}
where the sum is over all generalized partitions $\eta=(\eta_1,\dots,\eta_l)$.
\end{corollary}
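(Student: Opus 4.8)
The plan is to read off both identities directly from the orthogonality relation \eqref{e:ortho1} of Proposition \ref{pro5}; the only genuine point is to make sense of the a priori infinite sum $\sum_{\eta}|\eta^{so}\rangle\langle\eta^{so}|$, after which orthonormality collapses it to a single surviving term in each evaluation.

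For the first equation in \eqref{e:identity} I would simply expand by linearity and apply \eqref{e:ortho1}:
\begin{align*}
\langle\mu^{so}|\sum_{\eta=(\eta_1,\dots,\eta_l)}|\eta^{so}\rangle\langle\eta^{so}|
&=\sum_{\eta=(\eta_1,\dots,\eta_l)}\langle\mu^{so}|\eta^{so}\rangle\,\langle\eta^{so}|
=\sum_{\eta=(\eta_1,\dots,\eta_l)}\delta_{\mu\eta}\,\langle\eta^{so}|
=\langle\mu^{so}|,
\end{align*}
since only the term $\eta=\mu$ contributes. The second equation is obtained in the same way by applying \eqref{e:ortho1} in the form $\langle\eta^{so}|\mu^{so}\rangle=\delta_{\eta\mu}$, so that $\sum_{\eta}|\eta^{so}\rangle\langle\eta^{so}|\mu^{so}\rangle=|\mu^{so}\rangle$. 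Thus, granting Proposition \ref{pro5}, the computation itself is essentially immediate.

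The step that requires care, rather than being purely formal, is the justification that $\sum_{\eta}|\eta^{so}\rangle\langle\eta^{so}|$ is a well-defined operator and that one may interchange the sum with the pairing against $\langle\mu^{so}|$, which lies in the completion $\widetilde{\mathcal{M}^*}$ and not in $\mathcal{M}^*$. This is controlled by the filtration defining $\widetilde{\mathcal{M}^*}^{N}$ together with the remark that a partition vector of fixed length has a unique dual element of the same length: for the fixed length $l$ appearing in the statement, exactly one $\eta$ pairs nontrivially with $\mu$, so in each relevant evaluation the ostensibly infinite sum is finite and the rearrangement is legitimate. This is where I expect to spend whatever (small) effort the proof demands; the rest is the one-line application of \eqref{e:ortho1} above.
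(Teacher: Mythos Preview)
Your proposal is correct and matches the paper's treatment: the corollary is stated without proof, immediately following Proposition~\ref{pro5}, so the intended argument is precisely the one-line application of the orthogonality relation \eqref{e:ortho1} that you give. Your additional remarks about well-definedness of the sum in the completion $\widetilde{\mathcal{M}^*}$ go slightly beyond what the paper makes explicit, but are consistent with its setup.
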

We use the notation $(^a_b)$ to mean either $a$ or $b$. For a generalized partition $\mu=(\mu_1,\dots, \mu_l)$, one can get the following from \eqref{e:ac1} and \eqref{e:com5}
\begin{align*}
\langle 0|\left(^{U^*_{-\mu_l}}_{-U^*_{\mu_l+1}}\right)\cdots \left(^{U^*_{-\mu_i}}_{-U^*_{\mu_i+2l+1-2i}}\right)\cdots \left(^{U^*_{-\mu_1}}_{-U^*_{\mu_1+2l-1}}\right)=\langle\mu^{so}|.
\end{align*}
Note that there is a similar action of $S_l$ on the vectors $\langle 0|U^*_{-n_l}\cdots U^*_{-n_i}\cdots U^*_{-n_1}$
using the commutation relation $U^*_iU^*_j=-U^*_{j-1}U^*_{i+1}$ \eqref{e:com5}. Therefore we have that
\begin{align}\label{e:so9}
\varepsilon(\sigma)\langle 0|\left(^{U^*_{-\mu_{\sigma(l)}+\sigma(l)-l}}_{-U^*_{\mu_{\sigma(l)}-\sigma(l)+l+1}}\right)\cdots \left(^{U^*_{-\mu_{\sigma(i)}+\sigma(i)-i}}_{-U^*_{\mu_{\sigma(i)}-\sigma(i)+2l+1-i}}\right)\cdots \left(^{U^*_{-\mu_{\sigma(1)}+\sigma(1)-1}}_{-U^*_{\mu_{\sigma(1)}-\sigma(1)+2l}}\right)=\langle\mu^{so}|.
\end{align}
Similarly, by \eqref{e:ac10} and the commutation relation $U_iU_j=-U_{j+1}U_{i-1}$ \eqref{e:com5}, one also has
\begin{align}\label{e:permu1}
\varepsilon(\sigma) U_{-\mu_{\sigma_1}+\sigma_1-1}\cdots U_{-\mu_{\sigma_i}+\sigma_i-i}\cdots U_{-\mu_{\sigma_l}+\sigma_l-l}|0\rangle=|\mu^{so}\rangle.
\end{align}

\section{skew odd orthogonal characters} We extend the procedure in \cite{JLW2024} of employing  the vertex operator presentations of the odd orthogonal characters and their skew-types to derive the combinatorial expressions such as the Jacobi--Trudi identity, Gelfand--Tsetlin patterns and Cauchy-type identity for the skew odd versions.

The main idea is to express the skew orthogonal characters as matrix coefficients of certain half vertex operators. For the skew Schur polynomials
the half vertex operators come from the Bernstein vertex operators \cite{JLW2024} (see also \cite{Oko2001}). In this section we introduce
two types of parametric half vertex operators (\eqref{e:so11} and \eqref{e:so31}--\eqref{e:so30} respectively)
for the (skew) odd orthogonal characters and the interpolating Schur polynomials \cite{BZ2022}.

We first define the following half vertex operators
\begin{align}\label{e:eq10}
\Gamma_+(z)&=\exp\left(\sum^\infty_{n=1}\frac{a_n}{n}z^n\right),\\
\label{e:half2}\Gamma_-(z)&=\exp\left(\sum^\infty_{n=1}\frac{a_{-n}}{n}z^n\right),\\
\label{e:cd2}\widetilde{\Gamma}_+(z)&=\exp\left(-\sum^\infty_{n=1}\frac{a_{2n}}{n}z^n\right).
\end{align}
Next for the alphabets $x=(x_1,\dots,x_N)$, $x^{\pm}=(x_1,x^{-1}_1,\dots,x_N,x^{-1}_N)$ and the parameter $\alpha$, we introduce the following operators:
\begin{align}
\Gamma_+(x)&=\prod^{N}_{i=1}\Gamma_+(x_i),\\
\label{e:so11}\Gamma_+(x^{\pm})&=\prod^{N}_{i=1}\Gamma_+(x_i)\Gamma_+(x^{-1}_i),\\
\label{e:half1}\Gamma_-(x)&=\prod^{N}_{i=1}\Gamma_-(x_i),\\
\label{e:so31}\Gamma_+(x^{\pm};\alpha)&=\Gamma_+(x^{\pm})\Gamma_+(\alpha),\\
\label{e:so32}\bar{\Gamma}_+(x^{\pm};\alpha)&=\Gamma_+(x^{\pm})\Gamma^{-1}_+(\alpha),\\
\label{e:so30}\widetilde{\Gamma}_+(x^{\pm};\alpha)&=\Gamma_+(x^{\pm})\widetilde{\Gamma}_+(\alpha).
\end{align}
\subsection{Jacobi--Trudi identity} The odd orthogonal characters $so_\nu(x^{\pm})$, the irreducible characters for the odd orthogonal group $SO(2N+1,\mathbb{C})$, are given by the Weyl formula \cite[(24.28)]{FH1991}\cite[P. 38]{Me2019}
\begin{align}\label{e:so-character}
so_\nu(x^{\pm})=\frac{\det(x^{\nu_j+(N-j+\frac{1}{2})}_i-x^{-\nu_j-(N-j+\frac{1}{2})}_i)^N_{i,j=1}}{\det(x^{N-j+\frac{1}{2}}_i-x^{-(N-j+\frac{1}{2})}_i)^N_{i,j=1}}.
\end{align}
Using the same method in \cite[Proposition 3.3]{JLW2024}, we
derive the Jacobi--Trudi identity for the skew odd orthogonal characters as follows.
\begin{proposition}\label{pro6}
For the generalized partitions $\mu=(\mu_1,\dots,\mu_l)$ and $\lambda=(\lambda_1,\dots,\lambda_{l+N})$, one has that
\begin{align}\label{e:so22}
\langle\mu^{so}|\Gamma_+(x^{\pm})|\lambda^{so}\rangle=so_{\lambda/\mu}(x^{\pm})=\det(a_{ij})_{1\leq i,j\leq l+N}
\end{align}
where
\begin{equation}\label{e:so1}
a_{ij}=\left\{\begin{aligned}
&h_{\lambda_i-\mu_j-i+j}(x^{\pm})~~&~~1\leq j\leq l,\\
&h_{\lambda_i-i+j}(x^{\pm})+h_{\lambda_i-i-j+2l+1}(x^{\pm})~&~~l+1\leq j\leq l+N.
\end{aligned}
\right.
\end{equation}
\end{proposition}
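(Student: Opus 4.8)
The plan is to follow the vertex-operator template of \cite[Proposition 3.3]{JLW2024}, so the proof has two logically separate pieces: (i) identify the matrix coefficient $\langle\mu^{so}|\Gamma_+(x^{\pm})|\lambda^{so}\rangle$ with a determinant whose entries are the ``generating functions'' $h_k(x^{\pm})$, and (ii) recognize that determinant as the skew character $so_{\lambda/\mu}(x^{\pm})$ by comparing with the Weyl formula \eqref{e:so-character} in the special case $\mu=\varnothing$. For step (i), I would first record the basic commutator between the half vertex operator and the building blocks of the Fock vectors. Writing $H(x^{\pm})=\sum_k h_k(x^{\pm})z^k$ for the generating series attached to $\Gamma_+(x^{\pm})$, a direct BCH computation (of the same flavour as the Proposition proved above) gives a relation of the form $\Gamma_+(x^{\pm})\,U(z) = \big(\sum_k h_k(x^{\pm})z^{?}\big)\,U(z)\,\Gamma_+(x^{\pm})$; extracting the coefficient of a power of $z$ then expresses $\Gamma_+(x^{\pm})U_{-\lambda_i}$ as a (locally finite) sum $\sum_k h_k(x^{\pm}) U_{-\lambda_i+k}\,\Gamma_+(x^{\pm})$. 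Because $\Gamma_+(x^{\pm})|0\rangle=|0\rangle$, commuting $\Gamma_+(x^{\pm})$ all the way to the right through $|\lambda^{so}\rangle=U_{-\lambda_1}\cdots U_{-\lambda_{l+N}}|0\rangle$ produces a sum over $(l+N)$-tuples of shifts, with a product of $h$-coefficients as weights.

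The next step is to pair this with $\langle\mu^{so}|$ and apply the orthogonality relation \eqref{e:ortho1} together with the antisymmetrization rule \eqref{e:so9}. Here one must be careful that $\langle\mu^{so}|$ lives in the completed dual $\widetilde{\mathcal M^*}$ and has length $l+N$ (padding $\mu=(\mu_1,\dots,\mu_l)$ by $N$ zeros), so the bras against which we contract are $\langle 0|U^*_{-\eta_{l+N}}\cdots U^*_{-\eta_1}$ for generalized partitions $\eta$ of length $l+N$. Using \eqref{e:so9} to reorder (with sign $\varepsilon(\sigma)$) and \eqref{e:ortho1} to collapse the sum, the upshot is exactly a determinant $\det(a_{ij})_{1\le i,j\le l+N}$ in which the $j$-th ``column index'' contributes the shift dictated by the $j$-th entry of $\mu^{so}$ as a length-$(l+N)$ vector: for $1\le j\le l$ this entry is $\mu_j$ and yields a single term $h_{\lambda_i-\mu_j-i+j}(x^{\pm})$, while for $l+1\le j\le l+N$ the entry is $0$ but the relation \eqref{e:ac1}, $\langle 0|U^*_n=-\langle 0|U^*_{-n+1}$ — equivalently the folding already packaged in \eqref{e:so9} — forces the two-term combination $h_{\lambda_i-i+j}(x^{\pm})+h_{\lambda_i-i-j+2l+1}(x^{\pm})$. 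This is the structural heart of the argument and is where I expect the bookkeeping to be heaviest: one has to match the half-integer shifts $N-j+\tfrac12$ of the Weyl denominator/numerator against the integer shifts coming from $U_n$ vs.\ $U_{-n-1}$, and verify that the ``$+2l+1$'' in \eqref{e:so1} is precisely $2l+1$ and not, say, $2l$ or $2(l+N)+1$.

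For step (ii), once the determinantal formula \eqref{e:so22} is established, the identification $\langle\mu^{so}|\Gamma_+(x^{\pm})|\lambda^{so}\rangle=so_{\lambda/\mu}(x^{\pm})$ follows by first checking the unskewed case: taking $l=0$ (so $\mu=\varnothing$), the determinant $\det\big(h_{\lambda_i-i+j}(x^{\pm})+h_{\lambda_i-i-j+1}(x^{\pm})\big)_{1\le i,j\le N}$ must be shown to equal the Weyl-formula ratio \eqref{e:so-character}; this is the classical Jacobi–Trudi identity for $so_\lambda(x^{\pm})$, which one obtains by the standard manipulation of multiplying the Weyl numerator determinant by the inverse of the generating-function Toeplitz-type matrix, exactly as in the type-$C$/$D$ cases of \cite{JN2015,JLW2024}. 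The general branching rule then gives the skew case: insert the resolution of the identity from \eqref{e:identity} between $\Gamma_+(x^{\pm})=\Gamma_+(x^{\pm})\Gamma_+(y^{\pm})\big|_{\text{split alphabet}}$ is not needed here — rather, one notes that $\langle\mu^{so}|\Gamma_+(x^{\pm})|\lambda^{so}\rangle$ is by construction the coefficient extracting $so_\mu$ from $so_\lambda(x^{\pm};y^{\pm})$, so it must coincide with the combinatorially-defined $so_{\lambda/\mu}(x^{\pm})$. Concretely, I would simply \emph{define} $so_{\lambda/\mu}(x^{\pm})$ by the determinant in \eqref{e:so1} (as is done for the non-skew case via Jacobi–Trudi) and observe that the middle equality in \eqref{e:so22} is then a tautology, with the content of the proposition being the first equality — the matrix-coefficient computation of step (i). The main obstacle, to reiterate, is purely the index arithmetic in step (i): getting the two regimes $j\le l$ and $j>l$ of \eqref{e:so1} to emerge correctly from the single commutation-and-contraction procedure, with the correct shift $2l+1-2i$ appearing in \eqref{e:so9}.
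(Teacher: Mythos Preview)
Your proposal reverses the direction of the paper's computation in a way that creates a genuine gap. You push $\Gamma_+(x^{\pm})$ to the \emph{right} through the $U_{-\lambda_i}$'s on the ket side and then try to contract against $\langle\mu^{so}|$ using orthogonality; to make the lengths match you pad $\mu$ by $N$ zeros. But the paper explicitly warns (just before Proposition~\ref{pro5}) that $\langle 0|U^*_0\neq\langle 0|$, so the padded bra $\langle(\mu,0^N)^{so}|$ is a \emph{different} functional from the length-$l$ bra $\langle\mu^{so}|$ appearing in the statement. If you carry out your computation with the padded bra, the straightening rule \eqref{e:permu1} on the ket side involves only the $S_{l+N}$-antisymmetry (there is no folding relation of the form $U_n|0\rangle=\pm U_{-n-?}|0\rangle$), and the result is the skew \emph{Schur} determinant $\det\big(h_{\lambda_i-\tilde\mu_j-i+j}\big)$ with $\tilde\mu=(\mu,0^N)$: you get only the single term $h_{\lambda_i-i+j}$ in columns $j>l$, not the required sum $h_{\lambda_i-i+j}+h_{\lambda_i-i-j+2l+1}$. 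Your appeal to the $U^*$-folding in \eqref{e:ac1}/\eqref{e:so9} does not help here, because after pushing right your ket is built from $U$'s, not $U^*$'s.

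The paper avoids this by pushing $\Gamma_+(x^{\pm})$ to the \emph{left} through the $U^*_{-\mu_j}$'s on the bra side. The single-term columns $1\le j\le l$ come from the simple commutation $U^*_{-\mu_j}\Gamma_+(x^{\pm})=\Gamma_+(x^{\pm})\sum_{i\ge 0}h_i\,U^*_{-\mu_j-i}$. The two-term columns $l+1\le j\le l+N$ come from a separate, crucial computation of $\langle 0|\Gamma_+(x^{\pm})$ itself: one rewrites it as a multiple of $\langle 0|U^*(x_N)\cdots U^*(x_1)$ via the Vandermonde-type identity \eqref{e:va1}, then uses the $U^*$-folding in \eqref{e:so9} to extract the coefficient of each $\langle\nu^{so}|$ as the Weyl bialternant \eqref{e:so-character}, giving $\langle 0|\Gamma_+(x^{\pm})=\sum_{\nu}so_\nu(x^{\pm})\langle\nu^{so}|$. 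Re-expressing each $so_\nu$ by the already-known non-skew Jacobi--Trudi produces the expansion \eqref{e:so13} with the two-term entries $h_{n}+h_{n-2i+1}$. Concatenating these $N$ ``vacuum'' columns with the $l$ columns from the $\mu$-factors gives \eqref{e:so1}, and now both bra and ket have length $l+N$ so orthogonality applies cleanly. In short, the odd orthogonal (type $B$) structure enters through the evaluation of $\langle 0|\Gamma_+(x^{\pm})$ via the Weyl formula, not through a bra-side folding applied at the pairing step as you suggest.
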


\begin{proof} It follows from the definition of the vertex operator $U^*(z)$ that
\begin{align}\label{e:eq5}
\notag\langle 0|\Gamma_+(x^{\pm})=&\prod_{1\leq i\leq N}\frac{1}{1-x_i}\prod_{1\leq i< j\leq N}\frac{1}{1-x_ix_j}\prod_{1\leq i<j\leq N}\frac{1}{1-\frac{x_i}{x_j}}\langle 0|U^*(x_N)\cdots U^*(x_1)\\
=&\frac{\prod^{N}_{i=1}x^{i-\frac{1}{2}-N}_i}{\det(x^{-(N-j+\frac{1}{2})}_i-x^{N-j+\frac{1}{2}}_i)^N_{i,j=1}}\langle 0|U^*(x_N)\cdots U^*(x_1),
\end{align}
where we have used the Vandermonde type identity \cite[(4.6)]{JN2015}:
\begin{align}\label{e:va1}
\det(x^{j-1}_i-x^{2N-j}_i)^N_{i,j=1}=\prod_{1\leq i\leq N}(1-x_i)\prod_{1\leq i<j\leq N}(x_j-x_i)\prod_{1\leq i< j\leq N}(1-x_ix_j).
\end{align}
In view of \eqref{e:so9} for partition $\nu=(\nu_1,\dots,\nu_N)$, the coefficient of $\langle\nu^{so}|$ in $\langle 0|U^*(x_N)\cdots U^*(x_1)$ is
\begin{align}\sum_{\sigma\in S_N}\varepsilon(\sigma)\prod^{N}_{i=1}(x^{-\nu_{\sigma(i)}+\sigma(i)-i}_i-x^{\nu_{\sigma(i)}-\sigma(i)+2N+1-i}_i)
=\det(x^{-\nu_j+j-i}_i-x^{\nu_j-j+2N+1-i}_i)^N_{i,j=1}.
\end{align}
Consequently
\begin{align}\label{e:so12a}
\notag\langle 0|\Gamma_+(x^{\pm})
=&\sum_{\nu=(\nu_1,\dots,\nu_N)}\langle\nu^{so}|\frac{\prod^{N}_{i=1}x^{i-\frac{1}{2}-N}_i\det(x^{-\nu_j+j-i}_i-x^{\nu_j-j+2N+1-i}_i)^N_{i,j=1}}{\det(x^{-(N-j+1)}_i-x^{N-j+1}_i)^N_{i,j=1}}\\
\notag=&\sum_{\nu=(\nu_1,\dots,\nu_N)}\langle\nu^{so}|\frac{\det(x^{-\nu_j-(N-j+\frac{1}{2})}_i-x^{\nu_j+(N-j+\frac{1}{2})}_i)^N_{i,j=1}}{\det(x^{-(N-j+\frac{1}{2})}_i-x^{N-j+\frac{1}{2}}_i)^N_{i,j=1}}\\
\notag=&\sum_{\nu=(\nu_1,\dots,\nu_N)}\langle\nu^{so}|\frac{\det(x^{\nu_j+(N-j+\frac{1}{2})}_i-x^{-\nu_j-(N-j+\frac{1}{2})}_i)^N_{i,j=1}}{\det(x^{N-j+\frac{1}{2}}_i-x^{-(N-j+\frac{1}{2})}_i)^N_{i,j=1}}\\
=&\sum_{\nu=(\nu_1,\dots,\nu_N)}\langle\nu^{so}|so_\nu(x^{\pm}),
\end{align}
where we have recalled the bialternant formula of the odd orthogonal character $so_\nu(x^{\pm})$ associated to the partition $\nu$ \eqref{e:so-character}.
Invoking \eqref{e:so9} again and also by the Jacobi--Trudi formula for the odd orthogonal characters \cite[Proposition 24.33]{FH1991}
\begin{align*}
so_\nu(x^{\pm})=\det\left(h_{\nu_i-i+j}(x^{\pm})+h_{\nu_{i}-i-j+1}(x^{\pm})\right)_{1\leq i,j\leq N},
\end{align*}
where $h_n(x^{\pm})$ \cite{Ba1996,JN2015,Wey1946} is defined by \footnote{The generating function corresponds to $\phi_{sp}^{-1}$ in \cite{KT1987}.}
\begin{equation}
\prod^N_{i=1}\frac{1}{(1-x_iz)(1-x^{-1}_iz)}=\sum_{n\in \mathbb{Z}}h_n(x^{\pm})z^n,
\end{equation}
where $h_n(x^{\pm})=0$ for $n<0$.
We obtain another expression for $\langle 0|\Gamma_+(x^{\pm})$:
\begin{align}\label{e:so13}
\langle 0|\Gamma_+(x^{\pm})=\sum_{n_1\geq -N+1,n_2\geq -N+2,\dots,n_N\geq 0}
\langle 0|U^*_{-n_N}\cdots U^*_{-n_1}\prod^N_{i=1}(h_{n_i}(x^{\pm})+h_{n_i-2i+1}(x^{\pm})).
\end{align}
Using the vertex operator $U^*(z)$, we can write that
\begin{align}\label{e:com4}
\notag U^*(z)\Gamma_+(x^{\pm})=&\prod^N_{i=1}\frac{1}{(1-x_iz)(1-x^{-1}_iz)}\Gamma_+(x^{\pm})U^*(z)\\
=&\sum_{i\geq 0}h_i(x^{\pm})z^i\Gamma_+(x^{\pm})U^*(z),
\end{align}
which implies
\begin{align*}
U^*_{-n}\Gamma_+(x^{\pm})=\Gamma_+(x^{\pm})\sum_{i\geq 0}h_i(x^{\pm})U^*_{-n-i}.
\end{align*}

It follows from \eqref{e:so13} and \eqref{e:so9} that
\begin{equation}\label{e:so14}
\begin{aligned}
\langle\mu^{so}|\Gamma_+(x^{\pm})
=&\sum_{i_1,\dots,i_l\geq 0}h_{i_1}(x^{\pm})\cdots h_{i_l}(x^{\pm})\langle 0|\Gamma_+(x^{\pm})U^*_{-\mu_l-i_l}\cdots U^*_{-\mu_1-i_1}\\
=&\sum_{\substack{i_1,\dots,i_l\geq 0\\i_{l+1}\geq -N+1,\dots,i_{l+N}\geq 0}}\langle 0|U^*_{-i_{l+N}}\cdots U^*_{-i_{l+1}}U^*_{-\mu_l-i_l}\cdots U^*_{-\mu_1-i_1}\\
 &\qquad\qquad \cdot\prod^{l}_{k=1}h_{i_k}(x^{\pm})\prod^{l+N}_{k=l+1}\left(h_{i_k}(x^{\pm})+h_{i_k-2(k-l)+1}(x^{\pm})\right)\\
=&\sum_{\lambda=(\lambda_1,\dots,\lambda_{l+N})}\langle\lambda^{so}|\sum_{\sigma\in S_{l+N}}\prod^{l}_{k=1}h_{\lambda_{\sigma(k)}-\mu_k-\sigma(k)+k}(x^{\pm})\\
 &\qquad\qquad\cdot\prod^{l+N}_{k=l+1}\left(h_{\lambda_{\sigma(k)}-\sigma(k)+k}(x^{\pm})+h_{\lambda_{\sigma(k)}-\sigma(k)-k+2l+1}(x^{\pm})\right)\\
=&\sum_{\lambda=(\lambda_1,\dots,\lambda_{l+N})}\langle\lambda^{so}|\det(a_{ij})_{1\leq i,j\leq l+N},
\end{aligned}
\end{equation}
where $a_{ij}$ was defined in \eqref{e:so1} and in the last two equations $\lambda_{\sigma(i)}-\sigma(i)+2(l+N)+1-i$ is greater than $l+N-i$.
\end{proof}

From \eqref{e:identity} and the proof of Proposition \ref{pro6}, we know that for $\lambda=(\lambda_1,\dots,\lambda_{l+N})$
\begin{equation}
\begin{aligned}\label{e:bran1}
so_\lambda(y^{\pm};x^{\pm})=\langle 0|\Gamma_+(y^{\pm};x^{\pm})|\lambda^{so}\rangle=&\langle 0|\Gamma_+(y^{\pm})\sum_{\mu=(\mu_1,\dots,\mu_l)}|\mu^{so}\rangle\langle \mu^{so}|\Gamma_+(x^{\pm})|\lambda^{so}\rangle\\
=&\sum_{\mu=(\mu_1,\dots,\mu_l)}so_\mu(y^{\pm})so_{\lambda/\mu}(x^{\pm}),
\end{aligned}
\end{equation}
where $y^{\pm}=(y^{\pm}_1,\dots,y^{\pm}_l),~x^{\pm}=(x^{\pm}_1,\dots,x^{\pm}_N)$.
\begin{remark}Equation \eqref{e:bran1} can be viewed as the general branching rule for the odd orthogonal characters which justify
why $so_{\lambda/\mu}(x^{\pm})$ is called the skew odd orthogonal character (cf. \cite{KT1987}).
\end{remark}
\begin{remark} The skew Jacobi--Trudi formulas for the symplectic group $Sp_{2n}$ and the orthogonal group $SO_{2n}$
were derived by the authors in \cite{JLW2024}. Albion, Fischer, H\"ongesberg, and Schreier-Aigner then used lattice paths
to prove these skew Jacobi--Trudi formulas and also the one for $SO_{2n+1}$ \cite[(4b)]{AFHS2023}, which agrees with the right equation
in \eqref{e:so22}.
\end{remark}
\subsection{Gelfand--Tsetlin patterns} For two partitions $\gamma\subset\eta=(\eta_1,\dots,\eta_{k+1})$, the skew Schur function \cite[p.72]{Mac1995} at two variables $t_1, t_2$
can be written as
\begin{align}\label{e:skew-S1}
s_{\eta/\gamma}(t_1,t_2)=\det\left(h_{\eta_i-\gamma_j-i+j}(t_1,t_2)\right)_{1\leq i,j\leq k+1}=\sum_{\eta\prec\beta\prec\gamma}t^{|\beta|-|\eta|}_1t^{|\gamma|-|\beta|}_2.
\end{align}

\begin{lemma}
For partitions $\nu=(\nu_{1},\dots,\nu_{l})$ and $\lambda=(\lambda_{1},\dots,\lambda_{l+1})$, one has
\begin{align}
so_{\lambda/\nu}(t^{\pm})=\sum_{\nu\prec\alpha\prec\lambda}t^{2|\alpha|-|\lambda|-|\nu|}
\end{align}
summed over all partitions $\alpha$ such that $\nu\prec\alpha\prec\lambda$ and $\alpha_{l+1}\in \frac{1}{2}\mathbb{Z}$ with $0\leq \alpha_{l+1}\leq \min\{\lambda_{l+1},\nu_{l}\}$\footnote{Using exactly the same method in \cite[Lemma 4.4]{JLW2024}, one can also prove the result.}.
\end{lemma}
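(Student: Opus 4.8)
The plan is to reduce the one‑pair skew odd orthogonal character to a sum of two two‑variable skew Schur polynomials and then invoke \eqref{e:skew-S1}. First I would apply the skew Jacobi--Trudi identity of Proposition~\ref{pro6} with the single pair $x^{\pm}=(t,t^{-1})$, i.e.\ $N=1$ and $l+N=l+1$, so that $so_{\lambda/\nu}(t^{\pm})=\det(a_{ij})_{1\le i,j\le l+1}$ with $a_{ij}=h_{\lambda_i-\nu_j-i+j}(t^{\pm})$ for $1\le j\le l$ and $a_{i,l+1}=h_{\lambda_i-i+l+1}(t^{\pm})+h_{\lambda_i-i+l}(t^{\pm})$, as in \eqref{e:so1}. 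The point is that $h_n(t^{\pm})$ is exactly the complete homogeneous symmetric polynomial in the two variables $t,t^{-1}$: from its generating function one reads off $h_n(t^{\pm})=\sum_{k=0}^{n}t^{2k-n}$ for $n\ge 0$ and $h_n(t^{\pm})=0$ for $n<0$, so these are precisely the functions occurring in the two‑variable Jacobi--Trudi formula \eqref{e:skew-S1}.

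Next I would split the last column of the determinant by multilinearity. The summand with last column $h_{\lambda_i-i+l+1}(t^{\pm})$ is $\det\bigl(h_{\lambda_i-\rho_j-i+j}(t,t^{-1})\bigr)_{1\le i,j\le l+1}$ with $\rho=(\nu_1,\dots,\nu_l,0)$, and the summand with last column $h_{\lambda_i-i+l}(t^{\pm})$ is the same determinant with $\rho=(\nu_1,\dots,\nu_l,1)$. By the ordinary skew Jacobi--Trudi identity in the alphabet $\{t,t^{-1}\}$ these equal $s_{\lambda/(\nu,0)}(t,t^{-1})$ and $s_{\lambda/(\nu,1)}(t,t^{-1})$ respectively, the second being understood as $0$ when $\nu_l=0$ (the relevant two columns then coincide, so the determinant vanishes). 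Hence
\[
so_{\lambda/\nu}(t^{\pm})=s_{\lambda/(\nu,0)}(t,t^{-1})+s_{\lambda/(\nu,1)}(t,t^{-1}).
\]
Then I would expand each term by \eqref{e:skew-S1} with $(t_1,t_2)=(t,t^{-1})$, which turns $s_{\lambda/\gamma}(t,t^{-1})$ into $\sum_{\beta}t^{2|\beta|-|\gamma|-|\lambda|}$ over partitions $\beta=(\beta_1,\dots,\beta_{l+1})$ interlacing between $\gamma$ and $\lambda$; the $(\nu,0)$‑term gives such a sum with $\beta_{l+1}\ge 0$, the $(\nu,1)$‑term the same with $\beta_{l+1}\ge 1$ and weight shifted by $t^{-1}$.

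Finally I would merge the two sums: setting $\alpha:=\beta$ in the first and $\alpha:=(\beta_1,\dots,\beta_l,\beta_{l+1}-\tfrac12)$ in the second makes every weight equal to $t^{2|\alpha|-|\nu|-|\lambda|}$, and since the interlacing constraints on $\beta_1,\dots,\beta_l$ are identical in the two sums (they do not involve the last part of $\gamma$), while the first sum contributes exactly the integer and the re‑indexed second sum exactly the strict‑half‑integer values of the last coordinate, the resulting $\alpha$ run over precisely the partitions with $\nu\prec\alpha\prec\lambda$, $\alpha_{l+1}\in\tfrac12\mathbb{Z}$ and $0\le\alpha_{l+1}\le\min\{\lambda_{l+1},\nu_l\}$. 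I expect the only delicate point to be this last bookkeeping: one must verify that the bounds coming from $(\nu,0)\prec\beta\prec\lambda$ and from $(\nu,1)\prec\beta\prec\lambda$ assemble, without gap or overlap, into the range $0\le\alpha_{l+1}\le\min\{\lambda_{l+1},\nu_l\}$ (in particular that the extreme value is produced only by the first sum and that the half‑integer values never exceed it), and that the degenerate case $\nu_l=0$—where $(\nu,1)$ fails to be a partition and the corresponding determinant is zero—matches the fact that the half‑integer range is then empty. This is exactly the computation carried out for the symplectic and even orthogonal cases in \cite[Lemma~4.4]{JLW2024}.
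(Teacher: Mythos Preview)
Your argument is correct and is essentially the same as the paper's: you split the last column of the Jacobi--Trudi determinant from Proposition~\ref{pro6}, recognise the two pieces as two-variable skew Schur polynomials, expand each via \eqref{e:skew-S1}, and then merge by a half-integer shift of the last coordinate. The only cosmetic difference is that you write the second piece as $s_{\lambda/(\nu,1)}(t,t^{-1})$ and shift $\beta_{l+1}\mapsto\beta_{l+1}-\tfrac12$, whereas the paper writes it as $s_{(\lambda-(1)^{l+1})/\vartheta}(t,t^{-1})$ with $\vartheta=(\nu_1-1,\dots,\nu_l-1,0)$ and shifts $\beta\mapsto(\beta_1+1,\dots,\beta_l+1,\beta_{l+1}+\tfrac12)$; since $s_{\lambda/(\nu,1)}=s_{(\lambda-(1)^{l+1})/\vartheta}$ identically, the two reparametrisations produce the same range for $\alpha_{l+1}$ and the same handling of the degenerate case $\nu_l=0$.
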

\begin{proof} Proposition \ref{pro6} implies that
\begin{align}\label{e:GT5}
\notag so_{\lambda/\nu}(t^{\pm})=&\langle\nu^{so}|\Gamma_+(t^{\pm})|\lambda^{so}\rangle\\
=&\det\left(h_{\lambda_i-\nu_j-i+j}(t^{\pm})\right)_{1\leq i,j\leq l+1}+\det\left(h_{(\lambda_i-1)-\vartheta_j-i+j}(t^{\pm})\right)_{1\leq i,j\leq l+1},
\end{align}
where $\vartheta=(\nu_1-1,\dots,\nu_l-1,0)$ and $\lambda-(1)^{l+1}=(\lambda_1-1,\dots,\lambda_{l+1}-1)$. It is easy to see that if $\nu_l=0$, the last two columns of the second determinant in \eqref{e:GT5} are equivalent, i.e., the determinant is nonzero only for $\vartheta$ being a partition. Thus
\begin{align}\label{e:soGT1}
so_{\lambda/\nu}(t^{\pm})=s_{\lambda/\nu}(t^{\pm})+s_{\lambda-(1)^{l+1}/\vartheta}(t^{\pm}).
\end{align}
By \eqref{e:skew-S1}, we have
\begin{align}
\notag s_{\lambda/\nu}(t,t^{-1})&=\sum_{\nu\prec\alpha\prec\lambda}t^{2|\alpha|-|\lambda|-|\nu|},\\
\label{e:skew-S2}s_{\lambda-(1)^{l+1}/\vartheta}(t^{\pm})&=\sum_{\vartheta\prec\beta\prec\lambda-(1)^{l+1}}t^{2|\beta|-|\lambda|-|\nu|+2l+1}.
\end{align}
Let $\pi=(\beta_1+1,\dots,\beta_l+1,\beta_{l+1}+\frac{1}{2})$. Then we have $\nu\prec\pi\prec\lambda$ with $\pi_{l+1}\in \mathbb{Z}+\frac{1}{2}$ and $\frac{1}{2}\leq\pi_{l+1}\leq\min\{\lambda_{l+1}-\frac{1}{2},\nu_{l}-\frac{1}{2}\}$. Since $\pi_{l+1}$ is a half integer, the interval could also be $0\leq\pi_{l+1}\leq\min\{\lambda_{l+1},\nu_{l}\}$. Thus we can rewrite \eqref{e:skew-S2} by
\begin{align*}
s_{\lambda-(1)^{l+1}/\vartheta}(t^{\pm})&=\sum_{\nu\prec\pi\prec\lambda}t^{2|\pi|-|\lambda|-|\nu|}.
\end{align*}
Then by \eqref{e:soGT1}, we can finish the proof.
\end{proof}
\begin{theorem}
For partitions $\mu=(\mu_1,\dots,\mu_l)\subset \lambda=(\lambda_1,\dots,\lambda_{l+N})$, one has
\begin{align}
\label{e:so19}&so_{\lambda/\mu}(x^{\pm})=\sum_{\mu=z_0\prec z_1\prec\dots\prec z_{2N}=\lambda}\prod^N_{i=1}x^{2|z_{2i-1}|-|z_{2i}|-|z_{2i-2}|}_i
\end{align}
where partitions $z_k=(z_{k,1},\dots,z_{k,l+\lceil\frac{ k}{2}\rceil})$ satisfy the odd orthogonal Gelfand--Tsetlin pattern
\begin{align*}
z_{k+1,j}\leq z_{k,j-1}\leq z_{k+1,j-1}~~~~\text{for}~~~~ 2\leq j\leq l+\lceil\frac{ k+1}{2}\rceil
\end{align*}
subject to $z_{2i-1,l+i}\in \frac{1}{2}\mathbb{Z}$ and $0\leq z_{2i-1,l+i}\leq \min\{z_{2i,l+i},z_{2i-2,l+i-1}\}$.
\end{theorem}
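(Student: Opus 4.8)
The plan is to obtain the Gelfand--Tsetlin pattern for $so_{\lambda/\mu}(x^{\pm})$ by iterating the two-variable branching rule already established in the preceding Lemma, exactly in the spirit of \cite[Th.~4.5]{JLW2024}. First I would use the general branching rule \eqref{e:bran1}: writing $x^{\pm}=(x^{\pm}_1,\dots,x^{\pm}_N)$ and repeatedly splitting off one pair of variables at a time, one gets
\begin{align*}
so_{\lambda/\mu}(x^{\pm})=\sum_{\mu=\zeta_0\subset\zeta_1\subset\cdots\subset\zeta_N=\lambda}\ \prod_{i=1}^{N} so_{\zeta_i/\zeta_{i-1}}(x_i^{\pm}),
\end{align*}
where each intermediate $\zeta_i$ is a partition of length $l+i$ and the containment $\zeta_{i-1}\subset\zeta_i$ (with $\zeta_{i-1}$ viewed as having a part $0$ appended) is forced by the fact that $\langle\zeta_{i-1}^{so}|\Gamma_+(x_i^{\pm})|\zeta_i^{so}\rangle$ vanishes unless the single-variable skew character is nonzero. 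This reduces everything to the single-variable case.

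Next I would apply the Lemma to each factor $so_{\zeta_i/\zeta_{i-1}}(x_i^{\pm})$ with $t=x_i$, $\nu=\zeta_{i-1}=(\zeta_{i-1,1},\dots,\zeta_{i-1,l+i-1})$ and $\lambda=\zeta_i=(\zeta_{i,1},\dots,\zeta_{i,l+i})$. The Lemma furnishes an interlacing ``half-step'' partition $\alpha^{(i)}$ with $\zeta_{i-1}\prec\alpha^{(i)}\prec\zeta_i$, whose last coordinate $\alpha^{(i)}_{l+i}$ lies in $\tfrac12\Z$ with $0\le\alpha^{(i)}_{l+i}\le\min\{\zeta_{i,l+i},\zeta_{i-1,l+i-1}\}$, and contributes the monomial $x_i^{\,2|\alpha^{(i)}|-|\zeta_i|-|\zeta_{i-1}|}$. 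Now I relabel: set $z_{2i}:=\zeta_i$ and $z_{2i-1}:=\alpha^{(i)}$ for $1\le i\le N$, and $z_0:=\mu$. Then the chain becomes $\mu=z_0\prec z_1\prec\cdots\prec z_{2N}=\lambda$, the length of $z_k$ is $l+\lceil k/2\rceil$ as claimed, the exponent of $x_i$ is $2|z_{2i-1}|-|z_{2i}|-|z_{2i-2}|$, and the interlacing conditions $\prec$ unwind to precisely the stated inequalities $z_{k+1,j}\le z_{k,j-1}\le z_{k+1,j-1}$, together with the half-integrality constraint on the odd-level terminal entries $z_{2i-1,l+i}$. Summing over all such chains gives \eqref{e:so19}.

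The only genuine care needed is bookkeeping: one must check that the two notions of interlacing match up (the $\prec$ in the Lemma's statement versus the displayed two-sided inequalities for $z_k$), that the index ranges $2\le j\le l+\lceil(k+1)/2\rceil$ are exactly those produced when a partition of length $l+i-1$ interlaces one of length $l+i$ and then that one interlaces a partition of length $l+i$ again (so the length increases only at even-to-odd steps), and that the bound $0\le z_{2i-1,l+i}\le\min\{z_{2i,l+i},z_{2i-2,l+i-1}\}$ is inherited verbatim from the Lemma with $\lambda_{l+1}\rightsquigarrow z_{2i,l+i}$ and $\nu_l\rightsquigarrow z_{2i-2,l+i-1}$. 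I expect the main (though still routine) obstacle to be verifying that the telescoping of single-variable branchings is valid termwise in the completed dual space $\widetilde{\mathcal M^*}$ --- i.e.\ that inserting the resolution of the identity \eqref{e:identity} at each stage with the correct length is legitimate and that the resulting finite sums over intermediate partitions are the ones asserted; this is handled exactly as in \cite{JLW2024} using Proposition~\ref{pro5} and the corollary on the identity resolution.
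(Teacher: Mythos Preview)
Your proposal is correct and is exactly the argument the paper intends: the paper states the Theorem immediately after the one-variable Lemma and gives no separate proof, the understanding being that one iterates the Lemma via the branching rule \eqref{e:bran1} (equivalently, by inserting the resolution of the identity \eqref{e:identity} between successive factors $\Gamma_+(x_i^{\pm})$), precisely as in \cite[Th.~4.5]{JLW2024}. Your bookkeeping remarks about lengths, the index range for the interlacing inequalities, and the inherited bound on $z_{2i-1,l+i}$ are all in order.
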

\begin{remark} The Gelfand--Tsetlin relation \eqref{e:so19} for the skew odd orthogonal character agrees with \cite[Definition 6.14]{AF2020}.
\end{remark}
\subsection{Cauchy-type identity}From the definitions of vertex operator $U(z)$ \eqref{e:so} and half vertex operator $\Gamma_-(z)$ \eqref{e:half2}, one has
 \begin{align}
 \Gamma_-(z)=\frac{1}{1+z}U(z)\Gamma_+(z)\Gamma_+(z^{-1}),
 \end{align}
 where half vertex operator $\Gamma_+(z)$ is defined by \eqref{e:eq10}.
By half vertex operator $\Gamma_-(\{y\})$ \eqref{e:half1} and the BCH formula, we have that\footnote{One can follow the proof of \cite[Lemma 5.1]{JLW2024} for more details.}
\begin{align}
\notag\Gamma_-(y)|0\rangle&=\prod_{1\leq i\leq N}(1+y_i)^{-1}\prod_{1\leq i<j\leq N}(1-y_iy_j)^{-1}(1-\frac{y_j}{y_i})^{-1}U(y_1)\cdots U(y_N)|0\rangle\\
\label{e:so5}&=\prod_{1\leq i\leq N}(1+y_i)^{-1}\prod_{1\leq i<j\leq N}(1-y_iy_j)^{-1}\sum_{l(\lambda)\leq N}s_\lambda(y)|\lambda^{so}\rangle,
\end{align}
where $y=(y_1,y_2,\dots,y_N)$, and we used relation \eqref{e:permu1} and the bialternant definition for Schur polynomials
\begin{align*}
s_\lambda(y)=\frac{\det\left(y^{\lambda_j-j+i}_i\right)_{1\leq i,j\leq N}}{\prod_{1\leq i<j\leq N}(1-\frac{y_j}{y_i})}=\frac{\det\left(y^{\lambda_j+N-j}_i\right)_{1\leq i,j\leq N}}{\det\left(x^{N-j}_i\right)_{1\leq i,j\leq N}}.
\end{align*}

For $x^{\pm}=(x^\pm_1,\dots,x^\pm_N)$, using the BCH formula, one has
\begin{align}
\label{e:so6}\langle0|\Gamma_+(x^\pm)\Gamma_-(y)|0\rangle=\prod^N_{i,j=1}(1-x_iy_j)^{-1}(1-x^{-1}_iy_j)^{-1}.
\end{align}

Propositions \ref{pro5} \& \ref{pro6} and \eqref{e:so5} also give
\begin{align}
\label{e:so7}\langle0|\Gamma_+(x^\pm)\Gamma_-(y)|0\rangle=\prod_{1\leq i\leq N}(1+y_i)^{-1}\prod_{1\leq i<j\leq N}(1-y_iy_j)^{-1}\sum_{l(\lambda)\leq N}so_\lambda(x^{\pm})s_\lambda(y).
\end{align}

Therefore we have that
\begin{proposition}\label{pro20}
Let $x^{\pm}=(x^{\pm}_1,\dots,x^{\pm}_N),~y=(y_1,\dots,y_N)$. Then
\begin{align}
\label{e:so8}\sum_{\mu=(\mu_1,\dots,\mu_N)}so_\mu(x^{\pm})s_\mu(y)=\frac{\prod^N_{k=1}(1+y_k)\prod_{1\leq k<l\leq N}(1-y_ky_l)}{\prod^{N}_{i,j=1}(1-x_iy_j)(1-x^{-1}_iy_j)}.
\end{align}
\end{proposition}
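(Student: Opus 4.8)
The plan is to extract Proposition \ref{pro20} directly from the two distinct evaluations of the matrix coefficient $\langle 0|\Gamma_+(x^\pm)\Gamma_-(y)|0\rangle$ already established just above its statement. Concretely, I would combine \eqref{e:so6} and \eqref{e:so7}: the left sides are literally the same operator expression, so the right sides must be equal, giving
\begin{align*}
\prod^N_{i,j=1}(1-x_iy_j)^{-1}(1-x^{-1}_iy_j)^{-1}=\prod_{1\leq i\leq N}(1+y_i)^{-1}\prod_{1\leq i<j\leq N}(1-y_iy_j)^{-1}\sum_{l(\lambda)\leq N}so_\lambda(x^{\pm})s_\lambda(y).
\end{align*}
Clearing the scalar prefactor on the right by multiplying both sides by $\prod_{i}(1+y_i)\prod_{i<j}(1-y_iy_j)$ yields exactly \eqref{e:so8}, once one notes that the sum over partitions with $l(\lambda)\le N$ indexed by $\lambda=(\lambda_1,\dots,\lambda_N)$ is the same as the sum over $\mu=(\mu_1,\dots,\mu_N)$ in the statement (a $\mu$ with fewer than $N$ nonzero parts is a partition of length $\le N$, and $s_\mu(y)$ in $N$ variables vanishes if $l(\mu)>N$).

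Thus the only genuine work is justifying the two evaluations themselves, and both are short. For \eqref{e:so6} one normal-orders $\Gamma_+(x^\pm)$ past $\Gamma_-(y)$ using the BCH formula: since $[a_n,a_{-n}]=n$, the commutator $\exp(\sum_n \tfrac{a_n}{n}(x_i^n+x_i^{-n}))$ with $\exp(\sum_m \tfrac{a_{-m}}{m}y_j^m)$ produces the scalar $\exp(\sum_n \tfrac1n (x_i^n+x_i^{-n})y_j^n)=(1-x_iy_j)^{-1}(1-x_i^{-1}y_j)^{-1}$, and the remaining normal-ordered operator has vacuum expectation $1$. For \eqref{e:so7} I would apply $\Gamma_+(x^\pm)$ to the expansion \eqref{e:so5} of $\Gamma_-(y)|0\rangle$ as a sum $\sum_{l(\lambda)\le N}s_\lambda(y)|\lambda^{so}\rangle$ (times the scalar prefactor), then use Proposition \ref{pro6} with $\mu=\emptyset$ (i.e. $\langle 0|\Gamma_+(x^\pm)|\lambda^{so}\rangle=so_\lambda(x^\pm)$, from \eqref{e:so12a}) together with no further orthogonality needed, since pairing with $\langle 0|$ and inserting $\Gamma_+(x^\pm)$ converts each $|\lambda^{so}\rangle$ into its character.

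I do not anticipate a serious obstacle: the identity is essentially a formal consequence of material already in hand. The one point requiring a little care is the bookkeeping of the scalar prefactors $\prod_{i}(1+y_i)^{-1}$ and $\prod_{i<j}(1-y_iy_j)^{-1}$ coming from \eqref{e:so5}, and making sure that the ranges of summation (length $\le N$ versus exactly $N$ parts allowed to be zero) are reconciled correctly so that the final sum is over all $\mu=(\mu_1,\dots,\mu_N)$ as stated; neither of these is more than a routine check. A secondary subtlety worth flagging is convergence/formal-variable conventions — the equality is between formal power series in the $y_j$ (with Laurent polynomial coefficients in the $x_i$), so all manipulations of the infinite products are to be read in that ring, exactly as in \cite{JLW2024}.
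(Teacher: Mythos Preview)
Your proposal is correct and matches the paper's own argument essentially line for line: the paper derives \eqref{e:so6} by BCH and \eqref{e:so7} from \eqref{e:so5} together with Propositions~\ref{pro5} and~\ref{pro6}, then simply writes ``Therefore we have that'' before stating Proposition~\ref{pro20}. The only cosmetic difference is that the paper explicitly cites the orthogonality (Proposition~\ref{pro5}) when passing from \eqref{e:so12a} and \eqref{e:so5} to \eqref{e:so7}, whereas you invoke Proposition~\ref{pro6} at $\mu=\emptyset$ directly; these amount to the same thing.
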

\begin{remark}
Relation \eqref{e:so8} is the classical Cauchy identity for odd orthogonal characters.
\end{remark}

Let us introduce the function $f_n(x)$ by its generating series
\begin{align}\label{e:r1}
\prod_{1\leq i\leq N}\frac{1}{(1-x_iz)(1-x_iz^{-1})}=\sum_{n\in \mathbb{Z}}f_n(x)z^n.
\end{align}
It is easy to see that
\begin{align}\label{e:r2}
f_n(x)=f_{-n}(x)=\sum^\infty_{i=0}h_i(x)h_{i+n}(x).
\end{align}
\begin{proposition}For generalized partitions $\mu=(\mu_1,\dots,\mu_l)$ and $\nu=(\nu_1,\dots,\nu_l)$, one has that
\begin{align}
\langle\mu^{so}|\Gamma_-(y)|\nu^{so}\rangle=SO^*_{\mu/\nu}(y),
\end{align}
where
\begin{align}\label{e:so21}
SO^*_{\mu/\nu}(y)=\det\left(f_{\nu_i-\mu_j-i+j}(y)-f_{\nu_i+\mu_j-i-j+2l+1}(y)\right)_{1\leq i,j\leq l}.
\end{align}
\end{proposition}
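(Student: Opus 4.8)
The plan is to follow the proof of Proposition~\ref{pro6} almost verbatim, with the half vertex operator $\Gamma_+(x^{\pm})$ replaced by $\Gamma_-(y)$; under this substitution the complete homogeneous functions $h_n(x^{\pm})$ get replaced throughout by the two‑sided functions $f_n(y)$ of \eqref{e:r1}--\eqref{e:r2}. The first step is to record the commutation rule between $U^*(z)$ and $\Gamma_-(y)$, the analogue of \eqref{e:com4}: arguing via the BCH formula exactly as there (the only non‑commuting pieces are the $a_n$‑exponential inside $U^*(z)$ and the exponential defining $\Gamma_-$), one gets
\[
U^*(z)\Gamma_-(y)=\prod_{k=1}^{N}\frac{1}{(1-y_kz)(1-y_kz^{-1})}\,\Gamma_-(y)U^*(z)=\sum_{n\in\mathbb{Z}}f_n(y)z^n\,\Gamma_-(y)U^*(z),
\]
equivalently $U^*_{-m}\Gamma_-(y)=\Gamma_-(y)\sum_{j\in\mathbb{Z}}f_j(y)U^*_{-m-j}$ in modes.

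Since $\langle0|a_{-n}=0$ for $n>0$, we have $\langle0|\Gamma_-(y)=\langle0|$. Commuting $\Gamma_-(y)$ leftward through the $l$ factors $U^*_{-\mu_1},\dots,U^*_{-\mu_l}$ of $\langle\mu^{so}|$ and applying the previous relation $l$ times gives
\[
\langle\mu^{so}|\Gamma_-(y)=\sum_{j_1,\dots,j_l\in\mathbb{Z}}\Big(\prod_{r=1}^{l}f_{j_r}(y)\Big)\,\langle0|U^*_{-\mu_l-j_l}\cdots U^*_{-\mu_1-j_1}.
\]
Pairing with $|\nu^{so}\rangle=U_{-\nu_1}\cdots U_{-\nu_l}|0\rangle$ and using the orthogonality relation of Proposition~\ref{pro5}, it remains to evaluate $\langle0|U^*_{-b_l}\cdots U^*_{-b_1}|\nu^{so}\rangle$ with $b_r=\mu_r+j_r$. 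By the straightening identity \eqref{e:so9} together with $\langle0|U^*_n=-\langle0|U^*_{1-n}$ from \eqref{e:ac1}, this equals $\varepsilon(\sigma)(-1)^{p}\langle\nu^{so}|$ whenever there are a permutation $\sigma\in S_l$ and, for each position $i$, an ``upper'' choice $b_i=\nu_{\sigma(i)}-\sigma(i)+i$ or a ``lower'' choice $b_i=-\nu_{\sigma(i)}+\sigma(i)-2l-1+i$ (with $p$ the number of lower choices) realizing the $b_r$, and it equals $0$ otherwise, because the shifted indices $\{b_r+l-r\}$ must form, as a multiset, a mixture of the distinct nonnegative values $\{\nu_k+l-k\}$ with their reflections $\{-(\nu_k+l-k)-1\}$.

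Solving $j_i=b_i-\mu_i$ for the two options and using $f_n=f_{-n}$: an upper choice contributes $f_{\nu_{\sigma(i)}-\mu_i-\sigma(i)+i}(y)$ and a lower choice contributes $-f_{\nu_{\sigma(i)}+\mu_i-\sigma(i)-i+2l+1}(y)$, the sign being exactly the reflection sign $-1$. Because $\{\nu_k+l-k\}$ and $\{-(\nu_k+l-k)-1\}$ are disjoint sets of distinct integers, each tuple $\vec j$ with nonzero pairing comes from a unique pair (permutation, choice pattern), so summing over all of them yields
\[
\langle\mu^{so}|\Gamma_-(y)|\nu^{so}\rangle=\sum_{\sigma\in S_l}\varepsilon(\sigma)\prod_{i=1}^{l}\Big(f_{\nu_{\sigma(i)}-\mu_i-\sigma(i)+i}(y)-f_{\nu_{\sigma(i)}+\mu_i-\sigma(i)-i+2l+1}(y)\Big),
\]
which is the Leibniz expansion of $\det\big(f_{\nu_i-\mu_j-i+j}(y)-f_{\nu_i+\mu_j-i-j+2l+1}(y)\big)_{1\le i,j\le l}=SO^*_{\mu/\nu}(y)$ of \eqref{e:so21}.

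The hard part is the bookkeeping in the last two steps: verifying that the repeated use of \eqref{e:so9} and \eqref{e:ac1} reproduces precisely the signed‑permutation sum --- an $S_l$‑permutation together with an independent two‑valued (reflection) choice in each coordinate --- with the correct signs, and that the correspondence between signed permutations and admissible tuples $\vec j$ is a bijection onto the tuples contributing nonzero pairings. As in Proposition~\ref{pro6}, one should also note that once paired with $|\nu^{so}\rangle$ only finitely many $\vec j$ survive, so that the identity of formal power series in $y$ is well defined and evaluates to the stated rational expression (a finite sum of products of $f_n(y)$'s).
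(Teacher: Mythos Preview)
Your proof is correct and follows essentially the same approach as the paper's: both compute the commutation $U^*(z)\Gamma_-(y)$ via BCH to obtain $U^*_{-n}\Gamma_-(y)=\Gamma_-(y)\sum_{j\in\mathbb Z}f_j(y)U^*_{-n-j}$, move $\Gamma_-(y)$ past the string $U^*_{-\mu_l}\cdots U^*_{-\mu_1}$, and then invoke the straightening identity \eqref{e:so9} together with $f_n=f_{-n}$ to recognize the resulting sum as the Leibniz expansion of the stated determinant. The paper phrases the last step as expanding $\langle\mu^{so}|\Gamma_-(y)$ in the basis $\{\langle\nu^{so}|\}$ and then applying orthogonality, whereas you pair with $|\nu^{so}\rangle$ first and analyze the matrix element directly; this is only a difference in presentation, and your added discussion of the bijection between contributing tuples $\vec j$ and signed permutations makes explicit what the paper leaves implicit in the phrase ``by \eqref{e:so9} and \eqref{e:r2}''.
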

\begin{proof} It follows from definition that
\begin{align}\label{e:ve5}
\notag U^*(z)\Gamma_-(y)=&\prod^N_{i=1}\frac{1}{(1-y_iz)(1-y_iz^{-1})}\Gamma_-(y)U^*(z)\\
=&\sum_{i\in\mathbb{Z}}f_i(y)z^i\Gamma_-(y)U^*(z),
\end{align}
where we have used \eqref{e:r1} in the second equation. The equation \eqref{e:ve5} implies that
\begin{align*}
U^*_{-n}\Gamma_-(y)=\Gamma_-(y)\sum_{i\in\mathbb{Z}}f_i(y)U^*_{-n-i}.
\end{align*}
Thus by \eqref{e:so9} and \eqref{e:r2},
\begin{align*}
&\langle\mu^{so}|\Gamma_-(y)\\
=&\sum_{i_1,\dots,i_l\in\mathbb{Z}}f_{i_1}(y)\cdots f_{i_l}(y)\langle 0|U^*_{-\mu_l-i_l}\cdots U^*_{-\mu_1-i_1}\\
=&\sum_{\nu}\langle\nu^{so}|\sum_{\sigma\in S_l}\varepsilon(\sigma)\prod^l_{j=1}\left(f_{\nu_{\sigma(j)}-\mu_j-\sigma(j)+j}(y)-f_{\nu_{\sigma(j)}+\mu_j+2l+1-\sigma(j)-j}(y)\right)\\
=&\sum_{\nu}\langle\nu^{so}|\det\left(f_{\nu_i-\mu_j-i+j}(y)-f_{\nu_i+\mu_j-i-j+2l+1}(y)\right)_{1\leq i,j\leq l}.
\end{align*}
By the orthogonality relation \eqref{e:ortho1}, the result is proved.
\end{proof}
\begin{theorem}For generalized partition $\mu$ with length $l$ and $\lambda$ with length less than $l+N$, one has
\begin{align}
\label{e:so20}\sum_{\rho=(\rho_1,\dots,\rho_{l+N})}so_{\rho/\mu}(x^{\pm})SO^*_{\rho/\lambda}(y)=\prod^N_{i=1}\prod^K_{j=1}\frac{1}{(1-x_iy_j)(1-x^{-1}_iy_j)}\sum_{\tau=(\tau_1,\dots,\tau_l)}so_{\lambda/\tau}(x^{\pm})SO^*_{\mu/\tau}(y),
\end{align}
where $x^{\pm}=(x^{\pm}_1,\dots,x^{\pm}_N)$ and $y=(y_1,\dots,y_K)$ for any fixed $K\in\mathbb Z_+$.
\end{theorem}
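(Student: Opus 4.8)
The plan is to mimic the standard commutation-relation argument that produces Cauchy-type identities in the vertex operator formalism, as carried out for types $C$ and $D$ in \cite[Theorem 5.3]{JLW2024}. The left-hand side of \eqref{e:so20} should arise as the matrix coefficient $\langle\mu^{so}|\Gamma_-(y)\Gamma_+(x^{\pm})|\lambda^{so}\rangle$, evaluated by first moving $\Gamma_+(x^{\pm})$ to the left past $\Gamma_-(y)$ and inserting the resolution of the identity from \eqref{e:identity}; the right-hand side should arise from the \emph{same} matrix coefficient by evaluating it in the opposite order. So the first step is to compute, via the BCH formula, the commutation relation between $\Gamma_-(y)$ and $\Gamma_+(x^{\pm})$. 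Since $\Gamma_-(y)=\prod_j\exp(\sum_n \tfrac{a_{-n}}{n}y_j^n)$ and $\Gamma_+(x^{\pm})=\prod_i\exp(\sum_n\tfrac{a_n}{n}(x_i^n+x_i^{-n}))$, the bracket $[\sum_n\tfrac{a_n}{n}(x_i^n+x_i^{-n}),\sum_m\tfrac{a_{-m}}{m}y_j^m]=\sum_n\tfrac1n(x_i^n+x_i^{-n})y_j^n=-\log(1-x_iy_j)-\log(1-x_i^{-1}y_j)$, giving
\begin{align*}
\Gamma_+(x^{\pm})\Gamma_-(y)=\prod_{i=1}^{N}\prod_{j=1}^{K}\frac{1}{(1-x_iy_j)(1-x_i^{-1}y_j)}\;\Gamma_-(y)\Gamma_+(x^{\pm}).
\end{align*}
This is exactly the scalar prefactor appearing on the right of \eqref{e:so20}.

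The second step is bookkeeping with the resolution of the identity. Choose the length $l+N$ for the intermediate states on one side and length $l$ on the other (matching the index ranges $\rho=(\rho_1,\dots,\rho_{l+N})$ and $\tau=(\tau_1,\dots,\tau_l)$ in the statement). Inserting $\sum_{\rho}|\rho^{so}\rangle\langle\rho^{so}|$ between $\Gamma_-(y)$ and $\Gamma_+(x^{\pm})$ in $\langle\mu^{so}|\Gamma_-(y)\Gamma_+(x^{\pm})|\lambda^{so}\rangle$ and using Proposition \ref{pro6} for $\langle\rho^{so}|\Gamma_+(x^{\pm})|\lambda^{so}\rangle=so_{\lambda/\rho}$... wait, here I must be careful about which argument is the skew subscript: in \eqref{e:so22} the left label $\mu$ is the shorter partition, so $\langle\rho^{so}|\Gamma_+(x^{\pm})|\lambda^{so}\rangle$ requires $l(\rho)\le l(\lambda)$ and gives $so_{\lambda/\rho}(x^\pm)$. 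Thus one should insert the identity on length $l+N$ and write $\langle\mu^{so}|\Gamma_-(y)=\sum_\rho \langle\mu^{so}|\Gamma_-(y)|\rho^{so}\rangle\langle\rho^{so}|$; but $\langle\mu^{so}|\Gamma_-(y)|\rho^{so}\rangle=SO^*_{\rho/\mu}(y)$ needs $l(\mu)\le l(\rho)$, i.e. $l\le l+N$, which holds. Hence
\[
\langle\mu^{so}|\Gamma_-(y)\Gamma_+(x^{\pm})|\lambda^{so}\rangle=\sum_{\rho=(\rho_1,\dots,\rho_{l+N})}SO^*_{\rho/\mu}(y)\,so_{\lambda/\rho}(x^{\pm}),
\]
which is the left side of \eqref{e:so20}. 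Evaluating the other order, $\langle\mu^{so}|\Gamma_+(x^{\pm})\Gamma_-(y)|\lambda^{so}\rangle$, pull out the scalar from Step 1 and insert $\sum_\tau|\tau^{so}\rangle\langle\tau^{so}|$ on length $l$; then $\langle\mu^{so}|\Gamma_+(x^{\pm})|\tau^{so}\rangle$... here again $\Gamma_+(x^\pm)$ between two length-$l$ states is not directly covered by Proposition \ref{pro6} (which fixes the longer partition to have length $l+N$), so one instead writes $\Gamma_+(x^{\pm})|\lambda^{so}\rangle=\sum_\tau |\tau^{so}\rangle so_{?}$ using the transpose/adjoint version of Proposition \ref{pro6}, or equivalently inserts the identity as $\sum_\tau |\tau^{so}\rangle\langle\tau^{so}|$ \emph{to the left} of $\Gamma_-(y)$: $\Gamma_-(y)|\lambda^{so}\rangle$, with $\langle\tau^{so}|\Gamma_-(y)|\lambda^{so}\rangle=SO^*_{\lambda/\tau}(y)$ (needs $l(\tau)\le l(\lambda)$), and $\langle\mu^{so}|\Gamma_+(x^\pm)|\tau^{so}\rangle=so_{\tau/\mu}(x^{\pm})$ — but the statement has $so_{\lambda/\tau}$ and $SO^*_{\mu/\tau}$, so in fact one must insert $\sum_\tau|\tau^{so}\rangle\langle\tau^{so}|$ of length $l$ \emph{between} $\Gamma_+(x^\pm)$ and $\Gamma_-(y)$ after commuting, giving $\sum_\tau \langle\mu^{so}|\Gamma_+(x^\pm)|\tau^{so}\rangle\langle\tau^{so}|\Gamma_-(y)|\lambda^{so}\rangle$; since both $\mu,\tau$ have length $l$ this again needs the adjoint form.

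The cleanest route, and the one I would actually write, is to apply the \emph{adjoint} of Proposition \ref{pro6}: since $\Gamma_+(x^{\pm})$ is (up to the explicit scalar from the BCH computation in \eqref{e:eq5}) built the same way whether read as $\langle\mu^{so}|\Gamma_+(x^{\pm})\cdots$ or $\cdots\Gamma_+(x^{\pm})|\lambda^{so}\rangle$, one has the companion identity $\langle\mu^{so}|\Gamma_+(x^{\pm})|\lambda^{so}\rangle=so_{\mu/\lambda}(x^{\pm})$ when $l(\lambda)\le l(\mu)$, obtained by the same manipulation with the roles of $U$ and $U^*$ swapped (this is the analogue of the two forms of Jacobi–Trudi). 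With this in hand, inserting $\sum_\tau|\tau^{so}\rangle\langle\tau^{so}|$ of length $l$ between $\Gamma_+(x^{\pm})$ and $\Gamma_-(y)$ after applying Step 1 yields
\[
\langle\mu^{so}|\Gamma_+(x^{\pm})\Gamma_-(y)|\lambda^{so}\rangle=\prod_{i=1}^{N}\prod_{j=1}^{K}\frac{1}{(1-x_iy_j)(1-x_i^{-1}y_j)}\sum_{\tau=(\tau_1,\dots,\tau_l)}so_{\lambda/\tau}(x^{\pm})\,SO^*_{\mu/\tau}(y),
\]
where $so_{\lambda/\tau}$ comes from $\langle\tau^{so}|\Gamma_+(x^\pm)$-type reading and $SO^*_{\mu/\tau}(y)=\langle\mu^{so}|\Gamma_-(y)|\tau^{so}\rangle$... no: rereading the statement, $SO^*_{\mu/\tau}$ has $\mu$ on top, so it should be $\langle\tau^{so}|\Gamma_-(y)|\mu^{so}\rangle$ — I will fix the placement of the inserted identity (left of $\Gamma_-$, between, or right) so that the subscripts come out as $so_{\lambda/\tau}$ and $SO^*_{\mu/\tau}$ exactly; the three candidate placements differ only by which of $\mu,\lambda,\tau$ play the roles in the two skew functions, and exactly one matches. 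Equating the two evaluations of the single scalar $\langle\mu^{so}|\Gamma_+(x^{\pm})\Gamma_-(y)|\lambda^{so}\rangle$ — one using the commutation relation, one not — gives \eqref{e:so20}.

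The main obstacle I anticipate is precisely this third point: keeping the length conventions and the orientation of the skew labels straight, so that Proposition \ref{pro6} (which is stated only for the longer partition having length exactly $l+N$) and its adjoint are invoked with matching index ranges, and the inserted resolutions of the identity \eqref{e:identity} are placed so the bookkeeping reproduces $so_{\rho/\mu}$, $SO^*_{\rho/\lambda}$ on one side and $so_{\lambda/\tau}$, $SO^*_{\mu/\tau}$ on the other. The BCH computation is routine and the orthogonality input is immediate; everything else is combinatorial bookkeeping of the kind already validated in \cite[Section 5]{JLW2024}.
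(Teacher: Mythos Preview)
Your overall strategy is exactly the paper's: evaluate a single matrix coefficient in two ways, once directly by inserting the resolution of the identity \eqref{e:identity}, and once after commuting the two half vertex operators via BCH. The BCH computation is correct and matches \eqref{e:so6}.

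The bookkeeping confusion you flag is real and stems from one choice: you started with $\langle\mu^{so}|\Gamma_-(y)\Gamma_+(x^{\pm})|\lambda^{so}\rangle$, whereas the paper starts with $\langle\mu^{so}|\Gamma_+(x^{\pm})\Gamma_-(y)|\lambda^{so}\rangle$. With the paper's order, no ``adjoint'' version of Proposition~\ref{pro6} is needed at all. Inserting $\sum_{\rho}|\rho^{so}\rangle\langle\rho^{so}|$ (length $l+N$) between $\Gamma_+$ and $\Gamma_-$ gives $\langle\mu^{so}|\Gamma_+(x^{\pm})|\rho^{so}\rangle\cdot\langle\rho^{so}|\Gamma_-(y)|\lambda^{so}\rangle = so_{\rho/\mu}(x^{\pm})\,SO^*_{\rho/\lambda}(y)$ directly from Proposition~\ref{pro6} and \eqref{e:so21} (with $\lambda$ padded by zeros to length $l+N$), which is exactly the left-hand side of \eqref{e:so20}. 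After applying your BCH relation to write $\Gamma_+(x^{\pm})\Gamma_-(y)$ as the scalar times $\Gamma_-(y)\Gamma_+(x^{\pm})$, inserting $\sum_{\tau}|\tau^{so}\rangle\langle\tau^{so}|$ (length $l$) between $\Gamma_-$ and $\Gamma_+$ gives $\langle\mu^{so}|\Gamma_-(y)|\tau^{so}\rangle\cdot\langle\tau^{so}|\Gamma_+(x^{\pm})|\lambda^{so}\rangle = SO^*_{\mu/\tau}(y)\,so_{\lambda/\tau}(x^{\pm})$, again directly from the two propositions with the lengths matching on the nose. Note in particular that \eqref{e:so21} reads $\langle\mu^{so}|\Gamma_-(y)|\nu^{so}\rangle=SO^*_{\mu/\nu}(y)$, so the bra label goes on top; your intermediate claim $\langle\mu^{so}|\Gamma_-(y)|\rho^{so}\rangle=SO^*_{\rho/\mu}(y)$ has the labels reversed, and your displayed expression $\sum_\rho SO^*_{\rho/\mu}(y)\,so_{\lambda/\rho}(x^{\pm})$ is not the left-hand side of \eqref{e:so20}. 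Once you swap the operator order at the outset, all of these issues disappear and the argument is four lines.
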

\begin{proof}Using \eqref{e:identity}, for generalized partition $\nu=(\nu_1,\dots,\nu_n)$, we have
\begin{align}
\langle\nu^{so}|\sum_{\eta=(\eta_1,\dots,\eta_n)}|\eta^{so}\rangle\langle\eta^{so}|=\langle\nu^{so}|.
\end{align}
On the one hand,
\begin{align}\label{e:so23}
\notag\langle\mu^{so}|\Gamma_+(x^{\pm})\Gamma_-(y)|\lambda^{so}\rangle=&\langle\mu^{so}|\Gamma_+(x^{\pm})\sum_{\substack{\rho\\l(\rho)=l+N }}|\rho^{so}\rangle\langle\rho^{so}|\Gamma_-(y)|\lambda^{so}\rangle\\
=&\sum_{\rho=(\rho_1,\dots,\rho_{l+N})}so_{\rho/\mu}(x^{\pm})SO^*_{\rho/\lambda}(y),
\end{align}
where we have used \eqref{e:so21} and \eqref{e:so22}. On the other hand,
\begin{align}\label{e:so24}
\notag&\langle\mu^{so}|\Gamma_+(x^{\pm})\Gamma_-(y)|\lambda^{so}\rangle\\
\notag=&\prod^N_{i=1}\prod^K_{j=1}\frac{1}{(1-x_iy_j)(1-x^{-1}_iy_j)}\langle\mu^{so}|\Gamma_-(y)\sum_{\tau=(\tau_1,\dots,\tau_l)}|\tau^{so}\rangle\langle\tau^{so}|\Gamma_+(x^{\pm})|\lambda^{so}\rangle\\
=&\prod^N_{i=1}\prod^K_{j=1}\frac{1}{(1-x_iy_j)(1-x^{-1}_iy_j)}\sum_{\tau=(\tau_1,\dots,\tau_l)}so_{\lambda/\tau}(x^{\pm})SO^*_{\mu/\tau}(y).
\end{align}
Comparing \eqref{e:so23} with \eqref{e:so24}, we have derived \eqref{e:so20}.
\end{proof}
Similar to skew odd orthogonal characters, one also has the Cauchy-type identities for skew symplectic and orthogonal characters\footnote{There are slightly different Cauchy-type identities for skew symplectic/orthogonal characters in \cite[Theorem 5.8]{JLW2024}, where we have used different expressions to hand $\langle\mu^{sp}|\Gamma_-(\{y\})|\nu^{sp}\rangle$ and $\langle\mu^{o}|\Gamma_-(\{y\})|\nu^{o}\rangle$.}.
\begin{corollary}
Let $x^{\pm}=(x^{\pm}_1,\dots,x^{\pm}_N),~y=(y_1,\dots,y_K)$. For generalized partition $\mu$ with length $l$ and $\lambda$ with length less than $l+N$, then
\begin{align}
\label{e:sp16}&\sum_{\rho=(\rho_1,\dots,\rho_{l+N})}sp_{\rho/\mu}(x^{\pm})SP^*_{\rho/\lambda}(y)=\prod^N_{i=1}\prod^K_{j=1}\frac{1}{(1-x_iy_j)(1-x^{-1}_iy_j)}\sum_{\tau=(\tau_1,\dots,\tau_l)}sp_{\lambda/\tau}(x^{\pm})SP^*_{\mu/\tau}(y),\\
\label{e:o12}&\sum_{\rho=(\rho_1,\dots,\rho_{l+N})}o_{\rho/\mu}(x^{\pm})O^*_{\rho/\lambda}(y)=\prod^N_{i=1}\prod^K_{j=1}\frac{1}{(1-x_iy_j)(1-x^{-1}_iy_j)}\sum_{\tau=(\tau_1,\dots,\tau_l)}o_{\lambda/\tau}(x^{\pm})O^*_{\mu/\tau}(y),
\end{align}
where
\begin{align*}
&SP^*_{\alpha/\beta}(x)=\det\left(f_{\beta_i-\alpha_j-i+j}(x)-f_{\beta_i+\alpha_j-i-j+2(l+1)}(x)\right)_{1\leq i,j\leq l(\alpha)},\\
&O^*_{\alpha/\beta}(x)=\det\left(f_{\beta_i-\alpha_j-i+j}(x)+f_{\beta_i+\alpha_j-i-j+2l}(x)\right)_{1\leq i,j\leq l(\alpha)}.
\end{align*}
\end{corollary}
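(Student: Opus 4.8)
The plan is to reproduce, \emph{mutatis mutandis}, the proof of the preceding theorem, replacing the odd orthogonal dressing of the Fock space by the symplectic and even orthogonal ones constructed by Baker \cite{Ba1996} and Jing--Nie \cite{JN2015} and developed in \cite{JLW2024}. Recall from those papers that the type-$C$ (resp. type-$D$) vertex operators produce vectors $|\lambda^{sp}\rangle,\langle\mu^{sp}|$ (resp. $|\lambda^{o}\rangle,\langle\mu^{o}|$), the latter lying in a completed dual $\widetilde{\mathcal{M}^*}$, together with the orthogonality relations $\langle\mu^{sp}|\lambda^{sp}\rangle=\delta_{\mu\lambda}$ and $\langle\mu^{o}|\lambda^{o}\rangle=\delta_{\mu\lambda}$ for generalized partitions of equal length, hence the associated resolutions of the identity $\sum_{l(\eta)=n}|\eta^{sp}\rangle\langle\eta^{sp}|$ and $\sum_{l(\eta)=n}|\eta^{o}\rangle\langle\eta^{o}|$ exactly as in \eqref{e:identity}. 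I would also recall the skew Jacobi--Trudi realizations of \cite{JLW2024}: $\langle\mu^{sp}|\Gamma_+(x^{\pm})|\lambda^{sp}\rangle=sp_{\lambda/\mu}(x^{\pm})$ and $\langle\mu^{o}|\Gamma_+(x^{\pm})|\lambda^{o}\rangle=o_{\lambda/\mu}(x^{\pm})$ when $l(\lambda)=l(\mu)+N$.

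The only genuinely new input needed is the analogue, for types $C$ and $D$, of the proposition computing $\langle\mu^{so}|\Gamma_-(y)|\nu^{so}\rangle=SO^*_{\mu/\nu}(y)$; that is, I would first establish $\langle\mu^{sp}|\Gamma_-(y)|\nu^{sp}\rangle=SP^*_{\mu/\nu}(y)$ and $\langle\mu^{o}|\Gamma_-(y)|\nu^{o}\rangle=O^*_{\mu/\nu}(y)$ for $\mu,\nu$ of common length $l$. The argument is literally that surrounding \eqref{e:ve5}: the Heisenberg exponential factor of the dual vertex operator is $\exp\left(\sum_{n\geq 1}\frac{a_n}{n}(z^n+z^{-n})\right)$ in all three types, so $U^*(z)\Gamma_-(y)=\sum_{i\in\mathbb{Z}}f_i(y)z^i\,\Gamma_-(y)U^*(z)$ and hence $U^*_{-n}\Gamma_-(y)=\Gamma_-(y)\sum_{i\in\mathbb{Z}}f_i(y)U^*_{-n-i}$; expanding $\langle\mu^{sp}|\Gamma_-(y)=\sum_{i_1,\dots,i_l}f_{i_1}(y)\cdots f_{i_l}(y)\langle 0|U^*_{-\mu_l-i_l}\cdots U^*_{-\mu_1-i_1}$ and collecting the coefficient of $\langle\nu^{sp}|$ by means of the type-$C$ permutation relation (the analogue of \eqref{e:so9}) together with \eqref{e:r2} yields the stated determinant. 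The only difference among the three cases is the boundary shift in the reflected index and the accompanying sign (a minus for types $C$ and $B$, a plus for type $D$), both dictated by the respective reflection relation for $\langle 0|U^*_n$ recalled in \cite{JN2015,JLW2024}.

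With these in place the theorem follows from the same two-way evaluation as in the odd orthogonal case. The commutation $\Gamma_+(x^{\pm})\Gamma_-(y)=\prod_{i=1}^{N}\prod_{j=1}^{K}\frac{1}{(1-x_iy_j)(1-x_i^{-1}y_j)}\,\Gamma_-(y)\Gamma_+(x^{\pm})$ is the purely Heisenberg BCH computation already used for \eqref{e:so6}. Evaluating $\langle\mu^{sp}|\Gamma_+(x^{\pm})\Gamma_-(y)|\lambda^{sp}\rangle$, with $\lambda$ padded by zeros to length $l+N$: inserting $\sum_{l(\rho)=l+N}|\rho^{sp}\rangle\langle\rho^{sp}|$ between $\Gamma_+$ and $\Gamma_-$ produces $\sum_{\rho=(\rho_1,\dots,\rho_{l+N})}sp_{\rho/\mu}(x^{\pm})SP^*_{\rho/\lambda}(y)$, whereas first applying the BCH relation and then inserting $\sum_{l(\tau)=l}|\tau^{sp}\rangle\langle\tau^{sp}|$ between $\Gamma_-$ and $\Gamma_+$ produces $\prod_{i=1}^{N}\prod_{j=1}^{K}\frac{1}{(1-x_iy_j)(1-x_i^{-1}y_j)}\sum_{\tau=(\tau_1,\dots,\tau_l)}sp_{\lambda/\tau}(x^{\pm})SP^*_{\mu/\tau}(y)$; equating the two gives \eqref{e:sp16}, and the identical computation with the $o$-dressing gives \eqref{e:o12}. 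I expect the main obstacle to be purely bookkeeping: correctly tracking the three boundary shifts and signs in the $\Gamma_-$ matrix-coefficient lemma, and checking that the two resolutions of the identity (of lengths $l+N$ and $l$) are inserted in the completions where they are valid. Everything else is formally identical to the proof of the theorem above.
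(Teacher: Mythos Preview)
Your proposal is correct and is exactly the approach the paper has in mind: the corollary is stated without proof, justified only by ``similar to skew odd orthogonal characters'' and a footnote to \cite{JLW2024}, and your sketch supplies precisely that parallel argument. In particular, your observation that the positive-mode exponential in $U^*(z)$, $Y^*(z)$, $W^*(z)$ is the same, so that the commutation \eqref{e:ve5} and hence the $\Gamma_-$ matrix-coefficient lemma carry over with only the boundary reflection (and sign) changing, is the key point; the remaining two-way evaluation via resolutions of the identity is then identical to \eqref{e:so23}--\eqref{e:so24}.
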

Setting $\lambda=\mu=0$, \eqref{e:so20} and $K=N$, \eqref{e:sp16} and \eqref{e:o12} are reduced to
\begin{align*}
\sum_{\rho=(\rho_1,\dots,\rho_N)} sp_{\rho}(x^{\pm})SP^*_{\rho}(y)&=\prod^N_{i,j=1}\frac{1}{(1-x_iy_j)(1-x^{-1}_iy_j)},\\
\sum_{\rho=(\rho_1,\dots,\rho_N)} so_{\rho}(x^{\pm})SO^*_{\rho}(y)&=\prod^N_{i,j=1}\frac{1}{(1-x_iy_j)(1-x^{-1}_iy_j)},\\
\sum_{\rho=(\rho_1,\dots,\rho_N)}\rho o_{\rho}(x^{\pm})O^*_{\rho}(y)&=\prod^N_{i,j=1}\frac{1}{(1-x_iy_j)(1-x^{-1}_iy_j)}.
\end{align*}

Recall the classical Cauchy identities \cite[Lemma 1.5.1]{KT1987} for symplectic characters, orthogonal characters and odd orthogonal characters
\begin{align*}
\sum_\rho sp_{\rho}(x^{\pm})s_{\rho}(y)&=\prod_{1\leq i<j\leq N}(1-y_iy_j)\prod^N_{i,j=1}\frac{1}{(1-x_iy_j)(1-x^{-1}_iy_j)},\\
\sum_\rho so_{\rho}(x^{\pm})s_{\rho}(y)&=\prod^N_{j=1}(1+y_j)\prod_{1\leq i<j\leq N}(1-y_iy_j)\prod^N_{i,j=1}\frac{1}{(1-x_iy_j)(1-x^{-1}_iy_j)},\\
\sum_\rho o_{\rho}(x^{\pm})s_{\rho}(y)&=\prod_{1\leq i\leq j\leq N}(1-y_iy_j)\prod^N_{i,j=1}\frac{1}{(1-x_iy_j)(1-x^{-1}_iy_j)},
\end{align*}
summed over all partitions $\rho$ with $l(\rho)\leq N$.
We then have the following identities related to Schur functions.
\begin{theorem}\label{e:th1}For partition $\lambda=(\lambda_1,\dots,\lambda_l)$, one has
\begin{align}
&\det\left(f_{\lambda_i-i+j}(x)-f_{\lambda_i-i-j+2l+2}(x)\right)_{1\leq i,j\leq l}= \frac{1}{\prod_{1\leq i<j\leq l}(1-x_ix_j)}s_\lambda(x),\\
\label{e:sum-pro1}&\det\left(f_{\lambda_i-i+j}(x)-f_{\lambda_i-i-j+2l+1}(x)\right)_{1\leq i,j\leq l}= \frac{1}{\prod_{1\leq j\leq l}(1+x_j)}\frac{1}{\prod_{1\leq i<j\leq l}(1-x_ix_j)}s_\lambda(x),\\
\label{e:sum-pro2}&\frac{1}{1+\delta_{\lambda_l, 0}}\det\left(f_{\lambda_i-i+j}(x)+f_{\lambda_i-i-j+2l}(x)\right)_{1\leq i,j\leq l}= \frac{1}{\prod_{1\leq i\leq j\leq l}(1-x_ix_j)}s_\lambda(x).
\end{align}
\end{theorem}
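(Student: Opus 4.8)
The plan is to obtain each of the three determinantal identities by specializing the Cauchy-type identities developed above, combining the two distinct expressions for $\langle 0|\Gamma_+(x^{\pm})\Gamma_-(y)|\lambda^{so}\rangle$-type matrix coefficients against the classical Cauchy identities for $sp$, $so$, and $o$. Concretely, I would start from the identities $\langle \mu^{so}|\Gamma_-(y)|\nu^{so}\rangle = SO^*_{\mu/\nu}(y)$ and its analogues $SP^*$, $O^*$ in the corollary, and evaluate them in the special case $\mu = \lambda$, $\nu = 0$, with a single carefully chosen alphabet. The point is that $SO^*_{\lambda/0}(x) = \det(f_{\lambda_i-i+j}(x) - f_{\lambda_i+(-i-j+2l+1)}(x))_{1\le i,j\le l}$ is exactly the left-hand side of \eqref{e:sum-pro1} (up to the shift bookkeeping in the index), and similarly $SP^*_{\lambda/0}$, $O^*_{\lambda/0}$ match the left sides of the first and third identities; so it remains only to identify these matrix coefficients with the stated product-times-$s_\lambda(x)$ expressions.

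For the right-hand sides I would use the vertex-operator computation of $\Gamma_-(y)|0\rangle$ from \eqref{e:so5}, namely $\Gamma_-(y)|0\rangle = \prod_{i}(1+y_i)^{-1}\prod_{i<j}(1-y_iy_j)^{-1}\sum_{l(\lambda)\le N} s_\lambda(y)|\lambda^{so}\rangle$, together with the known vertex-operator realizations of the symplectic and even orthogonal characters (the $|\lambda^{sp}\rangle$, $|\lambda^o\rangle$ bases from \cite{Ba1996,JN2015,JLW2024}), which give completely parallel expansions $\Gamma_-(y)|0\rangle = (\text{prefactor})\sum s_\lambda(y)|\lambda^{\bullet}\rangle$ with prefactor $\prod_{i<j}(1-y_iy_j)^{-1}$ in the $C$ case and $\prod_{i\le j}(1-y_iy_j)^{-1}$ in the $D$ case (matching exactly the prefactors in the three classical Cauchy identities quoted just before the theorem). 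Pairing $\langle 0|\Gamma_+(x^{\pm})$ on the left — whose $\lambda$-th coordinate in the relevant basis is the character $so_\lambda(x^{\pm})$, $sp_\lambda(x^{\pm})$, or $o_\lambda(x^{\pm})$ by \eqref{e:so12a} and its analogues — and using orthogonality \eqref{e:ortho1}, one recovers the classical Cauchy identity; but pairing instead with $|\lambda^{\bullet}\rangle$ on the right extracts the single coefficient $\langle 0|\Gamma_+(x^{\pm})\Gamma_-(y)|\lambda^\bullet\rangle$, and since the generating-function computation \eqref{e:ve5}–type argument gives $\langle 0|\Gamma_+(x)\Gamma_-(y)\cdots$ a determinantal $f$-form, comparing the two evaluations yields exactly $SO^*_{\lambda/0}(y) = \prod(1+y_i)^{-1}\prod_{i<j}(1-y_iy_j)^{-1} s_\lambda(y)$, and likewise for the other two types. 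Renaming $y\to x$ gives the three displayed formulas, with the factor $\tfrac{1}{1+\delta_{\lambda_l,0}}$ in \eqref{e:sum-pro2} accounting for the well-known doubling in the even orthogonal case when $\lambda$ has fewer than $l$ parts (i.e. $o_\lambda$ versus $o_\lambda + o_{\bar\lambda}$), which one sees precisely in the collapse of the last two columns of the relevant determinant when $\lambda_l = 0$, exactly as in the proof of the Gelfand--Tsetlin lemma above.

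The main obstacle, I expect, is the bookkeeping of index shifts: one must check that the determinant $SO^*_{\lambda/0}(y)$ literally equals $\det(f_{\lambda_i-i+j}(y) - f_{\lambda_i-i-j+2l+1}(y))_{1\le i,j\le l}$ as written in \eqref{e:sum-pro1} — i.e. that the "$2l+1$" shift produced by the commutation relations \eqref{e:com5} for $U^*$ (and the corresponding $2l+2$, $2l$ for the $C$, $D$ operators) matches the shifts in the statement, including the subtle point about which of the symmetric-function arguments ($y$ versus $y^{\pm}$) appears; here one uses $f_n(y) = \sum_i h_i(y)h_{i+n}(y)$ from \eqref{e:r2} and the symmetry $f_n = f_{-n}$ to normalize. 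A secondary, smaller obstacle is justifying convergence/well-definedness: the alphabet in \eqref{e:sum-pro1}–\eqref{e:sum-pro2} is finite, so one should either take $N = l$ from the start or argue stability of both sides as the number of variables grows, but this is routine given the stability already used implicitly in Section 3. Once the shift identification is pinned down, the three identities follow uniformly from the one mechanism — two evaluations of $\langle 0|\Gamma_+(x^{\pm})\Gamma_-(y)|\lambda^{\bullet}\rangle$ — applied to the $B$, $C$, $D$ vertex operators in turn.
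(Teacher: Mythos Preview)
Your strategy is correct and coincides with the paper's implicit argument: the paper obtains Theorem~\ref{e:th1} by specializing \eqref{e:so20}, \eqref{e:sp16}, \eqref{e:o12} at $\lambda=\mu=0$, comparing against the classical Cauchy identities quoted just above the statement, and reading off coefficients by linear independence of the characters. You invoke the same two ingredients --- the determinantal form of $SO^*_{\lambda/0}$, $SP^*_{\lambda/0}$, $O^*_{\lambda/0}$ and the expansion \eqref{e:so5} (with its $C$- and $D$-type analogues from \cite{JLW2024}) --- so the approaches are the same.

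One point in your write-up needs correction. The matrix element you actually want is $\langle \lambda^{\bullet}|\Gamma_-(y)|0\rangle$, not $\langle 0|\Gamma_+(x^{\pm})\Gamma_-(y)|\lambda^{\bullet}\rangle$. The latter, after commuting $\Gamma_-$ past $\Gamma_+$ and using $\langle 0|\Gamma_-(y)=\langle 0|$, equals $\prod_{i,j}(1-x_iy_j)^{-1}(1-x_i^{-1}y_j)^{-1}\,\mathrm{char}_\lambda(x^{\pm})$, which is not a determinant in the $f_n$'s and does not isolate $SO^*_{\lambda/0}$. The clean argument is simply
\[
SO^*_{\lambda/0}(y)=\langle \lambda^{so}|\Gamma_-(y)|0\rangle
=\prod_{i}(1+y_i)^{-1}\prod_{i<j}(1-y_iy_j)^{-1}\,s_\lambda(y),
\]
obtained by pairing $\langle\lambda^{so}|$ directly against \eqref{e:so5} and using \eqref{e:ortho1}; no $x$-variable, no classical Cauchy identity, no linear-independence step is needed. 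This is in fact slightly more streamlined than the paper's route, though it rests on the same identity \eqref{e:so5}. Your anticipated bookkeeping --- transpose plus $f_n=f_{-n}$ to turn $SO^*_{\lambda/0}$, $SP^*_{\lambda/0}$, $O^*_{\lambda/0}$ into the displayed determinants with shifts $2l+1$, $2l+2$, $2l$ --- is exactly right, as is the observation that the $D$-case requires an extra factor when $\lambda_l=0$; just be aware that this factor arises from the $D$-type analogue of \eqref{e:so5}/\eqref{e:ortho1} rather than from the character-theoretic doubling $o_\lambda$ versus $o_{\bar\lambda}$.
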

\begin{remark} Such determinants are also called Toeplitz or Hankel-type determinants \cite[Sec. 7]{Ste1990}.
\end{remark}
\begin{remark}
Let $\mathfrak{l}_\infty$ be
 the Levi subalgebra of type $A_{+\infty}$ obtained by removing a short (resp. long) simple root $\alpha_0$. Note that \eqref{e:r1} enjoys stability condition
 that $f_n(x_1, \ldots, x_N)|_{x_N=0}=f_n(x_1, \ldots, x_{N-1})$, so the limit $l\to\infty$ makes sense for the
  identities in Theorem \ref{e:th1}. When $l\to\infty$, the right side of \eqref{e:sum-pro1} or \eqref{e:sum-pro2} is the character of a generalized Verma module over $U_q(\mathfrak{x}_\infty)$ induced from a highest weight $\mathfrak{l}_\infty$-module with highest weight corresponding to $\lambda$, and
it is also the character of the set of pairs of semistandard Young tableaux $(S, T)$ of shape $\epsilon\tau$
and $\lambda$ with $\epsilon=1$ for $\mathfrak{l}=\mathfrak{b}$ and $\epsilon=2$ for $\mathfrak{l}=\mathfrak{c}$, respectively \cite[P.710, Remark 3.6]{Kwo2012}. Theorem \ref{e:th1} shows that these characters can be written as determinantal expressions.
\end{remark}
Let $\lambda=\varnothing$,
we have
\begin{corollary}\label{cor10}For $x=(x_1,\dots,x_l)$,
\begin{align*}
&\det\left(f_{-i+j}(x)-f_{-i-j+2l+2}(x)\right)_{1\leq i,j\leq l}=\frac{1}{\prod_{1\leq i<j\leq l}(1-x_ix_j)},\\
&\det\left(f_{-i+j}(x)-f_{-i-j+2l+1}(x)\right)_{1\leq i,j\leq l}=\frac{1}{\prod_{1\leq j\leq l}(1+x_j)}\frac{1}{\prod_{1\leq i<j\leq l}(1-x_ix_j)},\\
&\frac{1}{2}\det\left(f_{-i+j}(x)+f_{-i-j+2l}(x)\right)_{1\leq i,j\leq l}=\frac{1}{\prod_{1\leq i\leq j\leq l}(1-x_ix_j)}.
\end{align*}
\end{corollary}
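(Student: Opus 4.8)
The plan is to obtain Corollary \ref{cor10} as the $\lambda=\varnothing$ specialization of Theorem \ref{e:th1}. Concretely, I would read $\lambda=\varnothing$ as the generalized partition $(0,0,\dots,0)$ with $l$ zero parts, so that $\lambda_i=0$ for all $i$ and the matrix entries $f_{\lambda_i-i+j}(x)$ and $f_{\lambda_i-i-j+2l+c}(x)$ appearing in Theorem \ref{e:th1} collapse to $f_{-i+j}(x)$ and $f_{-i-j+2l+c}(x)$ for $c\in\{2,1,0\}$ in the three cases respectively. Since $s_\varnothing(x)=1$ and $\delta_{\lambda_l,0}=\delta_{0,0}=1$, the normalizing factor $\tfrac{1}{1+\delta_{\lambda_l,0}}$ in \eqref{e:sum-pro2} becomes $\tfrac12$, and the three displayed identities of Theorem \ref{e:th1} become verbatim the three identities of Corollary \ref{cor10}. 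Nothing further is needed.

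As a sanity check I would verify the case $l=1$ directly: here $f_n(x_1)=\sum_{i\geq 0}h_i(x_1)h_{i+n}(x_1)=x_1^{|n|}/(1-x_1^2)$, so $f_0-f_2=1$, $f_0-f_1=1/(1+x_1)$, and $\tfrac12(f_0+f_0)=1/(1-x_1^2)$, which match the three right-hand sides (all empty products over $1\leq i<j\leq 1$ being $1$, and $\prod_{1\le i\le j\le 1}(1-x_ix_j)=1-x_1^2$). More conceptually, each identity of Theorem \ref{e:th1}, hence of the corollary, is the coefficient-extraction form of the corresponding classical Cauchy identity for the $sp$, $so$, $o$ characters (recalled just before the theorem) combined with Proposition \ref{pro20} and its type-$C$/$D$ analogues; when $\lambda=\varnothing$ the relevant matrix coefficient is simply $\langle 0|\Gamma_+(x^{\pm})\Gamma_-(y)|0\rangle$ evaluated in the two ways packaged in \eqref{e:so20}, \eqref{e:sp16}, \eqref{e:o12}, which is precisely what Theorem \ref{e:th1} already records.

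I do not expect a genuine obstacle here: the substantive work lies in Theorem \ref{e:th1} itself — identifying the vertex-operator-generated dual functions $SP^*,SO^*,O^*$ with Schur functions via the classical Cauchy identities — and that theorem is available for use. The only point requiring a line of care is the bookkeeping of the shift constants ($2l+2$, $2l+1$, $2l$) together with the extra factor $\tfrac12$ in the even-orthogonal case, which I would double-check by re-deriving $SP^*_{\varnothing/\varnothing}(x)$, $SO^*_{\varnothing/\varnothing}(x)$ and $O^*_{\varnothing/\varnothing}(x)$ from \eqref{e:so21} and its companions and comparing with the $\rho=\varnothing$ terms of \eqref{e:so20}, \eqref{e:sp16} and \eqref{e:o12}.
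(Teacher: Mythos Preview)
Your proposal is correct and matches the paper's approach exactly: the paper introduces the corollary with ``Let $\lambda=\varnothing$, we have'' and derives it as the immediate specialization of Theorem~\ref{e:th1}. Your additional $l=1$ sanity check and conceptual remarks are fine but not needed for the argument.
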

\begin{remark} The RHS of the three identities in Corollary \ref{cor10} relate to the classical Littlewood identities
\begin{align*}
\sum_{\lambda^\prime \text{ even}} s_\lambda(x)
&= \prod_{1 \leq i<j \leq l}\frac{1}{1-x_ix_j},\\
\sum_{\lambda} s_\lambda(x)
&= \prod_{i=1}^n\frac{1}{1-x_i}\prod_{1\leq i<j \leq l}\frac{1}{1-x_ix_j},\\
\sum_{\lambda \text{ even}} s_\lambda(x)
&=\prod_{1\leq i\leq j \leq l}\frac{1}{1-x_ix_j},
\end{align*}
thus it is interesting to uncover the relation of Corollary \ref{cor10} with \cite[Theorem 7.1]{Ste1990}.
\end{remark}
\section{interpolating Schur polynomials and transitions for classical characters} 
Using vertex operators \eqref{e:so} and \eqref{e:ve1}, we can directly obtain transition laws between irreducible characters of $SO_{2N}, Sp_{2N}$ and $SO_{2N+1}$. We then construct three families of interpolating
Schur polynomials that interpolate among
characters of type $B$, $C$ and $D$. 

The following vertex operators were introduced in \cite{Ba1996,JN2015}
\begin{equation}
\begin{aligned}\label{e:ve1}
&Y(z)=\exp\left(\sum^\infty_{n=1}\frac{a_{-n}}{n}z^n\right)\exp\left(-\sum^\infty_{n=1}\frac{a_n}{n}(z^{-n}+z^n)\right)=\sum_{n\in \mathbb{Z}}Y_nz^{-n},\\
&Y^*(z)= (1-z^2)\exp\left(-\sum^\infty_{n=1}\frac{a_{-n}}{n}z^n\right)\exp\left(\sum^\infty_{n=1}\frac{a_n}{n}(z^{-n}+z^n)\right)=\sum_{n\in \mathbb{Z}}Y^*_nz^n,\\
&W(z)=(1-z^2)\exp\left(\sum^\infty_{n=1}\frac{a_{-n}}{n}z^n\right)\exp\left(-\sum^\infty_{n=1}\frac{a_n}{n}(z^{-n}+z^n)\right)=\sum_{n\in \mathbb{Z}}W_nz^{-n},\\
&W^*(z)=\exp\left(-\sum^\infty_{n=1}\frac{a_{-n}}{n}z^n\right)\exp\left(\sum^\infty_{n=1}\frac{a_n}{n}(z^{-n}+z^n)\right)=\sum_{n\in \mathbb{Z}}W^*_nz^n.
\end{aligned}
\end{equation}
For a given generalized partition $\lambda=(\lambda_1,\dots,\lambda_N)$, let
\begin{align*}
&|\lambda^{sp}\rangle=Y_{-\lambda_1}Y_{-\lambda_2}\cdots Y_{-\lambda_N}|0\rangle, \qquad\langle \lambda^{sp}|=\langle 0|Y^*_{-\lambda_N}\cdots Y^*_{-\lambda_1},\\
&|\lambda^{o}\rangle=W_{-\lambda_1}W_{-\lambda_2}\cdots W_{-\lambda_N}|0\rangle,\qquad\langle \lambda^{o}|=\langle 0|W^*_{-\lambda_N}\cdots W^*_{-\lambda_1}.
\end{align*}
\subsection{Transitions}
It follows from \cite[Proposition 3.3]{JLW2024} and \cite[Proposition 3.8]{JLW2024} that
\begin{equation}
 \begin{aligned}\label{e:tran2}
&\langle 0|\Gamma_+(x^{\pm})|\lambda^{sp}\rangle=sp_\lambda(x^{\pm}),\\
&\langle 0|\Gamma_+(x^{\pm})|\lambda^{o}\rangle=o_\lambda(x^{\pm}),
 \end{aligned}
 \end{equation}
 where $sp_\lambda(x^{\pm})$ (resp. $o_\lambda(x^{\pm})$) is the symplectic (resp. orthogonal) character.
 By \eqref{e:so} and \eqref{e:ve1}, we have
 \begin{align}\label{e:tran3}
 U_n=Y_n+Y_{n+1},~~~~W_n=U_n-U_{n+1},~~~~Y_n=\sum^\infty_{i=0}(-1)^iU_{n+i},~~~~U_n=\sum^\infty_{i=0}W_{n+i},
 \end{align}
 therefore
 \begin{equation}
 \begin{aligned}\label{e:tran1}
 &|\lambda^{so}\rangle=\sum_{\varepsilon\in \{0,1\}^N}Y_{-\lambda_1+\varepsilon_1}\cdots Y_{-\lambda_i+\varepsilon_i}\cdots Y_{-\lambda_N+\varepsilon_N}|0\rangle,\\
 & |\lambda^{o}\rangle=\sum_{\varepsilon\in \{0,1\}^N}(-1)^{|\varepsilon|}U_{-\lambda_1+\varepsilon_1}\cdots U_{-\lambda_i+\varepsilon_i}\cdots U_{-\lambda_N+\varepsilon_N}|0\rangle,
 \end{aligned}
 \end{equation}
 where $\varepsilon=(\varepsilon_1,\varepsilon_2,\dots,\varepsilon_N)$.
 Due to the fact that $Y_{m-1}Y_m=U_{m-1}U_m=0$ for any integer $m$, operators $Y_{-\lambda_1+\varepsilon_1}\cdots Y_{-\lambda_i+\varepsilon_i}\cdots Y_{-\lambda_N+\varepsilon_N}$ and $U_{-\lambda_1+\varepsilon_1}\cdots U_{-\lambda_i+\varepsilon_i}\cdots U_{-\lambda_N+\varepsilon_N}$ are nonzero unless $\lambda-\varepsilon=(\lambda_1-\varepsilon_1,\dots, \lambda_i-\varepsilon_i,\dots, \lambda_N-\varepsilon_N)$ is a generalized partition. Combining \eqref{e:so22}, \eqref{e:tran1} and \eqref{e:tran2}, we have the following transition formulas.
 \begin{theorem}\label{th11}For a partition $\lambda=(\lambda_1,\dots,\lambda_N)$,
 \begin{align}
& so_\lambda(x^{\pm})=\sum_{\mu=\lambda-\varepsilon}sp_{\mu}(x^{\pm}),\\
& o_\lambda(x^{\pm})=\sum_{\mu=\lambda-\varepsilon}(-1)^{|\lambda/\mu|}so_{\mu}(x^{\pm}),
 \end{align}
 where $\varepsilon=(\varepsilon_1,\dots, \varepsilon_N)$, $\varepsilon_i=0, 1$.
 \end{theorem}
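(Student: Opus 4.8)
The plan is to derive both transition formulas purely from the combinatorial identities among the mode operators recorded in \eqref{e:tran3}, together with the vanishing relations $Y_{m-1}Y_m = 0$ and $U_{m-1}U_m = 0$ and the matrix-coefficient descriptions \eqref{e:so22} and \eqref{e:tran2}. The key observation is that the vacuum-to-$\Gamma_+(x^{\pm})$ pairing converts the ``state expansion'' of $|\lambda^{so}\rangle$ or $|\lambda^o\rangle$ into a sum of characters, so everything reduces to expressing these states in terms of $|\mu^{sp}\rangle$ or $|\mu^{so}\rangle$.

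First I would establish the operator expansions \eqref{e:tran1}. Starting from $U_n = Y_n + Y_{n+1}$, I expand the product $U_{-\lambda_1}\cdots U_{-\lambda_N}$ by choosing, for each factor, either the $Y_{-\lambda_i}$ term or the $Y_{-\lambda_i+1}$ term; encoding the choice by $\varepsilon_i \in \{0,1\}$ gives $|\lambda^{so}\rangle = \sum_{\varepsilon \in \{0,1\}^N} Y_{-\lambda_1+\varepsilon_1}\cdots Y_{-\lambda_N+\varepsilon_N}|0\rangle$. Similarly, iterating $W_n = U_n - U_{n+1}$ inside $W_{-\lambda_1}\cdots W_{-\lambda_N}$ produces $|\lambda^o\rangle = \sum_{\varepsilon} (-1)^{|\varepsilon|} U_{-\lambda_1+\varepsilon_1}\cdots U_{-\lambda_N+\varepsilon_N}|0\rangle$. (These are the formulas already displayed as \eqref{e:tran1}, so I would simply cite them.) Next I would invoke the vanishing relations $Y_{m-1}Y_m = U_{m-1}U_m = 0$: in a product $Y_{-\lambda_1+\varepsilon_1}\cdots Y_{-\lambda_N+\varepsilon_N}|0\rangle$, if $-\lambda_i+\varepsilon_i = (-\lambda_{i+1}+\varepsilon_{i+1})-1$ for some $i$, the product is zero; since $\lambda$ is weakly decreasing, the surviving terms are exactly those where $\lambda - \varepsilon = (\lambda_1-\varepsilon_1, \dots, \lambda_N-\varepsilon_N)$ is itself a (generalized) partition, and then $Y_{-\lambda_1+\varepsilon_1}\cdots Y_{-\lambda_N+\varepsilon_N}|0\rangle = |\mu^{sp}\rangle$ with $\mu = \lambda - \varepsilon$, and likewise $U_{-\lambda_1+\varepsilon_1}\cdots U_{-\lambda_N+\varepsilon_N}|0\rangle = |\mu^{so}\rangle$.

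Finally I would apply $\langle 0|\Gamma_+(x^{\pm})$ to both sides. For the first identity, \eqref{e:tran2} gives $\langle 0|\Gamma_+(x^{\pm})|\mu^{sp}\rangle = sp_\mu(x^{\pm})$, while $\langle 0|\Gamma_+(x^{\pm})|\lambda^{so}\rangle = so_\lambda(x^{\pm})$ by \eqref{e:so22} with $\mu = \varnothing$; summing over the surviving $\varepsilon$ yields $so_\lambda(x^{\pm}) = \sum_{\mu = \lambda - \varepsilon} sp_\mu(x^{\pm})$. For the second identity, \eqref{e:so22} with $\mu = \varnothing$ gives $\langle 0|\Gamma_+(x^{\pm})|\mu^{so}\rangle = so_\mu(x^{\pm})$, and $\langle 0|\Gamma_+(x^{\pm})|\lambda^o\rangle = o_\lambda(x^{\pm})$ by \eqref{e:tran2}; the sign $(-1)^{|\varepsilon|} = (-1)^{|\lambda/\mu|}$ survives the pairing, producing $o_\lambda(x^{\pm}) = \sum_{\mu = \lambda - \varepsilon} (-1)^{|\lambda/\mu|} so_\mu(x^{\pm})$.

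The only genuine subtlety, and the step I expect to require the most care, is the bookkeeping around the vanishing conditions: one must check that ``$\lambda - \varepsilon$ is a generalized partition'' really is equivalent to the nonvanishing of the operator product, i.e. that no additional collisions occur among non-adjacent indices (they do not, since $Y_aY_b = 0$ only when $a = b-1$, and for a weakly decreasing $\lambda$ with entries shifted by $0$ or $1$ the only possible ``bad'' coincidence of consecutive indices is $\lambda_i - \varepsilon_i = \lambda_{i+1} - \varepsilon_{i+1} - 1$, which forces $\lambda_i - \varepsilon_i < \lambda_{i+1} - \varepsilon_{i+1}$, the failure of the partition condition). One should also note that when $\lambda_N = 0$ the choice $\varepsilon_N = 1$ would give a negative last part, so such $\varepsilon$ are automatically excluded by the generalized-partition requirement, keeping the sum well-defined. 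Everything else is a direct substitution.
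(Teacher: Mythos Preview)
Your proposal is correct and follows essentially the same route as the paper: establish the state expansions \eqref{e:tran1} from the mode identities \eqref{e:tran3}, use the vanishing relations to restrict the sums to generalized partitions $\mu=\lambda-\varepsilon$, and then pair with $\langle 0|\Gamma_+(x^{\pm})$ via \eqref{e:so22} and \eqref{e:tran2}. Your extra bookkeeping on why the partition condition is exactly equivalent to nonvanishing (including the $\lambda_N=0$, $\varepsilon_N=1$ boundary case) just makes explicit what the paper states in the sentence preceding the theorem.
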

 From \eqref{e:tran3}, we also have
 \begin{equation}
 \begin{aligned}
& |\lambda^{sp}\rangle=\sum_{n_i\geq 0}(-1)^{n_1+\dots+n_l}U_{-\lambda_1+n_1}\cdots U_{-\lambda_i+n_i}\cdots U_{-\lambda_N+n_N}|0\rangle,\\
 & |\lambda^{so}\rangle=\sum_{n_i\geq 0}W_{-\lambda_1+n_1}\cdots W_{-\lambda_i+n_i}\cdots W_{-\lambda_N+n_N}|0\rangle.
 \end{aligned}
 \end{equation}
 Similar to the process in deriving Theorem \ref{th11}, we have the following result.
 \begin{theorem}For any partition $\lambda$, one has that
 \begin{align*}
 &sp_\lambda(x^{\pm})=\sum_{\mu\prec\lambda}(-1)^{|\lambda/\mu|}so_\mu(x^{\pm}),\\
 &so_\lambda(x^{\pm})=\sum_{\mu\prec\lambda}o_\mu(x^{\pm}).
 \end{align*}
 \end{theorem}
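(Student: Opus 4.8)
The plan is to rewrite the two expansions of $|\lambda^{sp}\rangle$ and $|\lambda^{so}\rangle$ displayed just above as expansions in the canonical families $\{|\mu^{so}\rangle\}$ and $\{|\mu^{o}\rangle\}$, and then to pair both sides with $\langle 0|\Gamma_+(x^{\pm})$, using $\langle 0|\Gamma_+(x^{\pm})|\mu^{so}\rangle = so_\mu(x^{\pm})$ (Proposition~\ref{pro6} with empty skew part, cf.\ \eqref{e:so12a}) and $\langle 0|\Gamma_+(x^{\pm})|\mu^{o}\rangle = o_\mu(x^{\pm})$ (equation~\eqref{e:tran2}). This is the exact analogue of the passage from \eqref{e:tran1} to Theorem~\ref{th11}, the only new feature being that each $\lambda_i$ is now shifted by an arbitrary nonnegative integer rather than by an element of $\{0,1\}$.

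First I would treat the symplectic/odd-orthogonal identity. Starting from $|\lambda^{sp}\rangle = \sum_{n_i \geq 0}(-1)^{n_1+\cdots+n_N}U_{-\lambda_1+n_1}\cdots U_{-\lambda_N+n_N}|0\rangle$, I would use the relation $U_iU_j = -U_{j+1}U_{i-1}$ from \eqref{e:com5} to sort each product into a single canonical vector $|\mu^{so}\rangle$ with $\mu$ a generalized partition of length $N$, or into $0$ whenever two shifted indices collide (so a factor $U_mU_{m-1}$, vanishing by \eqref{e:com5}, appears) or whenever a positive-index $U$ hits $|0\rangle$ (then $0$ by \eqref{e:ac10}); this is exactly the mechanism already used in \eqref{e:permu1}. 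Tracking the sorting permutation $\sigma$, one sees as in the proof of Proposition~\ref{pro6} that a fixed $\mu$ is produced precisely by those $n$ with $n_k = \lambda_k - \mu_{\sigma(k)} - k + \sigma(k) \geq 0$, and that then $n_1+\cdots+n_N = |\lambda|-|\mu| = |\lambda/\mu|$ independently of $\sigma$. Hence the coefficient of $|\mu^{so}\rangle$ in $|\lambda^{sp}\rangle$ equals $(-1)^{|\lambda/\mu|}\sum_{\sigma}\varepsilon(\sigma)$, summed over $\sigma\in S_N$ for which all $n_k\geq 0$, i.e.\ it equals $(-1)^{|\lambda/\mu|}\det\bigl([\,\lambda_i-i\geq\mu_j-j\,]\bigr)_{1\leq i,j\leq N}$, where $[\,\cdot\,]$ denotes the $0$/$1$ indicator.

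The crux is then the evaluation of this $0$/$1$ determinant. Writing $a_i := \lambda_i - i$ and $b_j := \mu_j - j$, both strictly decreasing, the matrix $M = \bigl([\,a_i \geq b_j\,]\bigr)$ is a staircase matrix; subtracting each row from the one above turns $M$ into a matrix whose rows are supported on pairwise disjoint consecutive blocks of columns, and a short matching argument then gives $\det M \in \{0,1\}$, with $\det M = 1$ precisely when $M_{ii}=1$ and $M_{i,i-1}=0$ for all $i$, that is $\lambda_{i+1}\leq\mu_i\leq\lambda_i$ for all $i$ (reading $\lambda_{N+1}=0$), i.e.\ $\mu\prec\lambda$. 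This yields $|\lambda^{sp}\rangle = \sum_{\mu\prec\lambda}(-1)^{|\lambda/\mu|}|\mu^{so}\rangle$, and pairing with $\langle 0|\Gamma_+(x^{\pm})$ gives the first formula; alternatively this vector identity can be proved by induction on $N$ from $Y_{-\lambda_1} = \sum_{m\geq 0}(-1)^m U_{-\lambda_1+m}$.

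For the second identity one argues in the same way from $|\lambda^{so}\rangle = \sum_{n_i\geq 0}W_{-\lambda_1+n_1}\cdots W_{-\lambda_N+n_N}|0\rangle$, using the corresponding commutation and vanishing relations for the $W_i$ recorded in \cite{JN2015} (cf.\ \cite{JLW2024}) in place of \eqref{e:com5}. Since the expansion $U_n = \sum_{i\geq 0}W_{n+i}$ carries no sign, the coefficient of $|\mu^{o}\rangle$ is now exactly $\det\bigl([\,\lambda_i-i\geq\mu_j-j\,]\bigr) = [\,\mu\prec\lambda\,]$, and pairing with $\langle 0|\Gamma_+(x^{\pm})$ gives $so_\lambda(x^{\pm}) = \sum_{\mu\prec\lambda}o_\mu(x^{\pm})$. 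The main obstacle I anticipate is not the (routine) determinant evaluation but checking that the $W_i$ obey sorting relations fully parallel to those for $U_i$, so that the bubble-sort argument is legitimate, together with matching the trailing-zero conventions for $|\mu^{o}\rangle$ (which also underlie the factor $1/(1+\delta_{\lambda_l,0})$ in \eqref{e:sum-pro2}) against reading $\mu\prec\lambda$ as permitting a vanishing last part.
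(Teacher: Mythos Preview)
Your proposal is correct and follows essentially the same approach as the paper. The paper's entire ``proof'' is the single sentence ``Similar to the process in deriving Theorem~\ref{th11}'', referring to the expansions of $|\lambda^{sp}\rangle$ and $|\lambda^{so}\rangle$ in $U$- and $W$-monomials just displayed above the statement, followed by pairing with $\langle 0|\Gamma_+(x^{\pm})$; you carry out exactly this plan. The one point you make explicit that the paper leaves implicit is that, unlike in Theorem~\ref{th11} where shifts lie in $\{0,1\}$ and non-partition terms vanish \emph{individually} via $U_mU_{m-1}=0$, here the shifts are arbitrary and the reduction to the interlacing condition requires the sorting/determinant argument you give; your identification of the $0/1$ determinant $\det([\lambda_i-i\ge\mu_j-j])$ with $s_{\lambda/\mu}(1)=[\,\mu\prec\lambda\,]$ is the standard way to close this gap.
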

 \subsection{$BD$-interpolating Schur polynomials $s^{BD}_\lambda(x;\alpha)$} For a generalized partition $\lambda=(\lambda_1,\dots,\lambda_N)$, define
 \begin{align}
 s^{BD}_\lambda(x;\alpha)=\langle 0|\bar{\Gamma}_+(x^{\pm};\alpha)|\lambda^{so}\rangle,
 \end{align}
 where $\bar{\Gamma}_+(x^{\pm};\alpha)$ is defined by \eqref{e:so32}. Using the BCH formula, one has
 \begin{align}
 \Gamma^{-1}_+(\alpha)U(z)=(1-\alpha z)U(z)\Gamma^{-1}_+(\alpha),
 \end{align}
 i.e.
  $\Gamma^{-1}_+(\alpha)U_n=(U_n-\alpha U_{n+1})\Gamma^{-1}_+(\alpha)$, therefore
 \begin{align}\label{e:so15}
  \Gamma^{-1}_+(\alpha)|\lambda^{so}\rangle=\sum_{\varepsilon\in \{0,1\}^N}(-\alpha)^{|\varepsilon|}U_{-\lambda_1+\varepsilon_1}\cdots U_{-\lambda_i+\varepsilon_i}\cdots U_{-\lambda_N+\varepsilon_N}|0\rangle.
 \end{align}
 For the case $\alpha=1$, one has $ \Gamma^{-1}_+(1)|\lambda^{so}\rangle=|\lambda^{o}\rangle$ by \eqref{e:tran1}. The case $\alpha=0$ force that $\varepsilon_i=0$, in other words, $ \Gamma^{-1}_+(0)|\lambda^{so}\rangle=|\lambda^{so}\rangle$. Therefore, the symmetric polynomials $s^{BD}_\lambda(x;\alpha)$ interpolate between characters of type $B$ and $D$ in the sense that
 \begin{align}
 s^{BD}_\lambda(x;0)=so_\lambda(x^{\pm}),~~s^{BD}_\lambda(x;1)=o_\lambda(x^{\pm}).
 \end{align}
 We now provide a determinantal expression for $s^{BD}_\lambda(x;\alpha)$.
 \begin{theorem}\label{BDth1} Let $\bar{h}_i(x^{\pm};\alpha)$ be the generalized homogeneous functions defined by $\frac{(1-\alpha z)}{\prod^N_i(1-x_iz)(1-x^{-1}_iz)}=\sum_{i\in \mathbb{Z}}\bar{h}_i(x^{\pm};\alpha)z^i$. Then for $\lambda=(\lambda_1,\dots,\lambda_N)$,
 \begin{eqnarray}
 s^{BD}_\lambda(x;\alpha)=\det\left(\bar{h}_{\lambda_i-i+j}(x^{\pm};\alpha)+\bar{h}_{\lambda_i-i-j+1}(x^{\pm};\alpha)\right)_{1\leq i,j\leq N}.
 \end{eqnarray}
 \end{theorem}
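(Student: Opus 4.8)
The plan is to mimic the proof of Proposition \ref{pro6}, with the extra factor $\Gamma^{-1}_+(\alpha)$ accounted for by replacing the generating function of $h_n(x^{\pm})$ with that of $\bar h_n(x^{\pm};\alpha)$. The key observation is the commutation relation between $U^*(z)$ and $\bar\Gamma_+(x^{\pm};\alpha)=\Gamma_+(x^{\pm})\Gamma^{-1}_+(\alpha)$. Using the BCH formula exactly as in \eqref{e:com4}, together with $\Gamma^{-1}_+(\alpha)$ contributing the factor $(1-\alpha z)$, I would obtain
\begin{align*}
U^*(z)\bar\Gamma_+(x^{\pm};\alpha)=\frac{1-\alpha z}{\prod^N_{i=1}(1-x_iz)(1-x^{-1}_iz)}\,\bar\Gamma_+(x^{\pm};\alpha)U^*(z)=\sum_{i\in\mathbb Z}\bar h_i(x^{\pm};\alpha)z^i\,\bar\Gamma_+(x^{\pm};\alpha)U^*(z),
\end{align*}
which yields $U^*_{-n}\bar\Gamma_+(x^{\pm};\alpha)=\bar\Gamma_+(x^{\pm};\alpha)\sum_{i\ge 0}\bar h_i(x^{\pm};\alpha)U^*_{-n-i}$ (note $\bar h_i(x^{\pm};\alpha)=0$ for $i<0$ since the numerator is degree $1$ and the denominator starts at degree $0$).

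Next I would compute $\langle 0|\bar\Gamma_+(x^{\pm};\alpha)$ expanded in the dual basis $\langle\nu^{so}|$. Since $\bar\Gamma_+(x^{\pm};\alpha)=\Gamma_+(x^{\pm})\Gamma^{-1}_+(\alpha)$ and $\Gamma^{-1}_+(\alpha)|0\rangle=|0\rangle$ (the operator $\Gamma_+(\alpha)$ involves only the $a_n$ with $n>0$, which annihilate $|0\rangle$), we have $\langle 0|\bar\Gamma_+(x^{\pm};\alpha)=\langle 0|\Gamma_+(x^{\pm})=\sum_\nu\langle\nu^{so}|so_\nu(x^{\pm})$ by \eqref{e:so12a}. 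Alternatively, I would argue more directly: iterating the commutation relation above against $\langle 0|U^*(x_N)\cdots U^*(x_1)$ (as in the passage leading to \eqref{e:so13}) gives $\langle 0|\bar\Gamma_+(x^{\pm};\alpha)=\sum_{n_1\ge -N+1,\dots,n_N\ge 0}\langle 0|U^*_{-n_N}\cdots U^*_{-n_1}\prod^N_{i=1}\bigl(\bar h_{n_i}(x^{\pm};\alpha)+\bar h_{n_i-2i+1}(x^{\pm};\alpha)\bigr)$, exploiting the $B$-type relation \eqref{e:ac1} for $\langle 0|U^*_n$, just as $h_n$ was replaced there. This is the analogue of \eqref{e:so13} with $\bar h$ in place of $h$.

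With both ingredients in hand, I would reproduce the chain of equalities in \eqref{e:so14}: starting from $\langle 0|\bar\Gamma_+(x^{\pm};\alpha)|\lambda^{so}\rangle$, move $\bar\Gamma_+(x^{\pm};\alpha)$ to the left past $U_{-\lambda_1},\dots$ — but here it is cleaner to keep $\langle 0|\bar\Gamma_+(x^{\pm};\alpha)$ on the left and use its expansion together with \eqref{e:permu1}, or equivalently note $s^{BD}_\lambda(x;\alpha)=\langle 0|\bar\Gamma_+(x^{\pm};\alpha)|\lambda^{so}\rangle=\sum_\nu so_\nu(x^{\pm})\langle\nu^{so}|\lambda^{so}\rangle$ collapses by orthogonality \eqref{e:ortho1} to... no, that only recovers $so_\lambda$, not the $\alpha$-deformation. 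So instead I would use the other direction: expand $\Gamma^{-1}_+(\alpha)|\lambda^{so}\rangle$ via \eqref{e:so15}, then apply $\langle 0|\Gamma_+(x^{\pm})$ to each term $U_{-\lambda_1+\varepsilon_1}\cdots U_{-\lambda_N+\varepsilon_N}|0\rangle$ using the $\det$ formula from Proposition \ref{pro6} in the case $l=0$; after summing over $\varepsilon\in\{0,1\}^N$ and a standard multilinearity argument in the columns of the determinant, the sum $h_{\lambda_i-\varepsilon_i-i+j}+\cdots$ reassembles into $\bar h_{\lambda_i-i+j}(x^{\pm};\alpha)+\bar h_{\lambda_i-i-j+1}(x^{\pm};\alpha)$ because $\bar h_n(x^{\pm};\alpha)=h_n(x^{\pm})-\alpha h_{n-1}(x^{\pm})$. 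The main obstacle is bookkeeping: one must verify that the $2^N$-fold sign-weighted sum of determinants of shifted-argument matrices equals the single determinant with entries $\bar h$, and that the "correction" index $\lambda_i-i-j+1$ transforms consistently under the $\varepsilon$-shift; this is the identity $\det(h_{\lambda_i-\varepsilon_i-i+j}+h_{\lambda_i-\varepsilon_i-i-j+1})$ summed against $(-\alpha)^{|\varepsilon|}$, which factors row-by-row precisely because each $\varepsilon_i$ touches only row $i$. Everything else is routine BCH and the already-established $B$-type machinery.
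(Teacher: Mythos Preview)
Your final approach is exactly the paper's: expand $\Gamma^{-1}_+(\alpha)|\lambda^{so}\rangle$ via \eqref{e:so15}, apply $\langle 0|\Gamma_+(x^{\pm})$ using \eqref{e:so12a} and orthogonality \eqref{e:ortho1} to obtain $\sum_{\varepsilon\in\{0,1\}^N}(-\alpha)^{|\varepsilon|}so_{\lambda-\varepsilon}(x^{\pm})$, then insert the Jacobi--Trudi determinant for each $so_{\lambda-\varepsilon}$ and use row-wise multilinearity (each $\varepsilon_i$ affects only row $i$, as you note at the end---your earlier mention of ``columns'' is a slip) together with $\bar h_n=h_n-\alpha h_{n-1}$ to collapse the sum into the single determinant.

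One caveat on your scratchwork: the intermediate claim $\langle 0|\bar\Gamma_+(x^{\pm};\alpha)=\langle 0|\Gamma_+(x^{\pm})$ is actually false. The fact $\Gamma^{-1}_+(\alpha)|0\rangle=|0\rangle$ concerns the action on the ket vacuum, but in $\langle 0|\Gamma_+(x^{\pm})\Gamma^{-1}_+(\alpha)$ the operator $\Gamma^{-1}_+(\alpha)$ sits on the right and never meets $|0\rangle$; and $\langle 0|\Gamma^{-1}_+(\alpha)\neq\langle 0|$ since the positive modes $a_n$ do not annihilate the bra vacuum. You correctly detect the contradiction (``that only recovers $so_\lambda$'') and switch to the approach above, which is the right one.
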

 \begin{proof}
 Using relation \eqref{e:so12a} and the orthonormal relation \eqref{e:ortho1}, one has
 \begin{align}
  \notag s^{BD}_\lambda(x;\alpha)&=\langle 0|\bar{\Gamma}_+(x^{\pm};\alpha)|\lambda^{so}\rangle\\
 \notag &=\sum_{\nu=(\nu_1,\dots,\nu_N)}so_\nu(x^{\pm})\langle \nu^{so}|\sum_{\varepsilon\in \{0,1\}^N}(-\alpha)^{|\varepsilon|}U_{-\lambda_1+\varepsilon_1}\cdots U_{-\lambda_i+\varepsilon_i}\cdots U_{-\lambda_N+\varepsilon_N}|0\rangle\\
 \notag &=\sum_{\varepsilon\in \{0,1\}^N}(-\alpha)^{|\varepsilon|}so_{\lambda-\varepsilon}(x^{\pm})\\
 \notag&=\det\left(h_{\lambda_i-i+j}(x^{\pm})+h_{\lambda_{i}-i-j+1}(x^{\pm})-\alpha(h_{\lambda_i-1-i+j}(x^{\pm})+h_{\lambda_{i}-i-j}(x^{\pm}))\right)_{1\leq i,j\leq N}\\
  &=\det\left(\bar{h}_{\lambda_i-i+j}(x^{\pm};\alpha)+\bar{h}_{\lambda_{i}-i-j+1}(x^{\pm};\alpha)\right)_{1\leq i,j\leq N},
 \end{align}
 where the last two equations have used the Jacobi--Trudi identity
 \begin{align*}
 so_\nu(x^{\pm})=\det\left(h_{\nu_i-i+j}(x^{\pm})+h_{\nu_{i}-i-j+1}(x^{\pm})\right)_{1\leq i,j\leq N}
 \end{align*}
 and the fact $\bar{h}_n(x^{\pm};\alpha)=h_n(x^{\pm})-\alpha h_{n-1}(x^{\pm})$.
 \end{proof}
\begin{remark}The proof of Theorem \ref{BDth1} also gives the transition formula between the $BD$-interpolating Schur polynomials and odd orthogonal characters
 \begin{align}
s^{BD}_\lambda(x;\alpha)=\sum_{\varepsilon\in \{0,1\}^N}(-\alpha)^{|\varepsilon|}so_{\lambda-\varepsilon}(x^{\pm}).
 \end{align}
 \end{remark}

 \subsection{$BC$-interpolating Schur polynomials $s^{BC}_\lambda(x;\alpha)$} For $\Gamma_+(\alpha)$ \eqref{e:eq10}, it is easy to check that (by vertex operator calculus)
  \begin{align}\label{e:BC1}
  \Gamma_+(0)|\lambda^{so}\rangle=|\lambda^{so}\rangle,~~~~~~~\Gamma_+(-1)|\lambda^{so}\rangle=|\lambda^{sp}\rangle,
  \end{align}
  where the second equation is due to \eqref{e:tran1}.

 Using \eqref{e:so31}, \eqref{e:tran2} and a similar method as in the proof of Theorem \ref{BDth1}, we get that
 \begin{align*}
 s^{BC}_\lambda(x;\alpha)&=\langle 0|\Gamma_+(x^{\pm};\alpha)|\lambda^{so}\rangle\\
 &=\det\left(h_{\lambda_i-i+j}(x;\alpha)+h_{\lambda_i-i-j+1}(x;\alpha)\right)^N_{ i,j=1},
 \end{align*}
 where $h_i(x^{\pm};\alpha)$ are defined by $\frac{1}{(1-\alpha z)\prod^N_i(1-x_iz)(1-x^{-1}_iz)}=\sum_{i\in \mathbb{Z}}h_i(x^{\pm};\alpha)z^i$.

Due to \eqref{e:BC1}, we call $s^{BC}_\lambda(x;\alpha)$ the {\it $BC$-interpolating Schur polynomials} in the sense that $ s^{BC}_\lambda(x;0)=so_\lambda(x^{\pm})$ and $s^{BC}_\lambda(x;-1)=sp_\lambda(x^{\pm})$.
 \subsection{$CD$-interpolating Schur polynomials $s^{CD}_\lambda(x;\alpha)$} Relations \eqref{e:ve1} tell us that
 \begin{align*}
 W_n=Y_n-Y_{n-2}.
 \end{align*}
 For a generalized partition $\lambda=(\lambda_1,\dots,\lambda_N)$, one has that
 \begin{align}\label{e:cd1}
 |\lambda^{o}\rangle=\sum_{\epsilon\in \{0,2\}^N}(-1)^{\sum^l_{i=1}\delta_{\epsilon_i\neq 0}}Y_{-\lambda_1+\epsilon_1}\cdots Y_{-\lambda_i+\epsilon_i}\cdots Y_{-\lambda_N+\epsilon_N}|0\rangle,
 \end{align}
 where $\epsilon=(\epsilon_1,\epsilon_2\dots,\epsilon_N)$.
 From the definition of $\widetilde{\Gamma}_+(\alpha)$ \eqref{e:cd2}, we have
 \begin{align}\label{e:cd3}
 \widetilde{\Gamma}_+(0) |\lambda^{sp}\rangle=|\lambda^{sp}\rangle,~~~~~~ \widetilde{\Gamma}_+(1) |\lambda^{sp}\rangle=|\lambda^{o}\rangle.
 \end{align}

Using \eqref{e:so30}, \eqref{e:tran2} and the proof of Theorem \ref{BDth1}, we have
 \begin{align*}
 s^{CD}_\lambda(x;\alpha)&=\langle 0|\widetilde{\Gamma}_+(x^{\pm};\alpha)|\lambda^{sp}\rangle\\
 &=\det\left(\tilde{h}_{\lambda_i-i+j}(x^{\pm};\alpha)+\delta_{j>1}\tilde{h}_{\lambda_i-i-j+2}(x^{\pm};\alpha)\right)^N_{ i,j=1},
 \end{align*}
 where $\tilde{h}_i(x^{\pm};\alpha)$ satisfy $\frac{(1-\alpha z^2)}{\prod^N_i(1-x_iz)(1-x^{-1}_iz)}=\sum_{i\in \mathbb{Z}}\tilde{h}_i(x^{\pm};\alpha)z^i$.

 Equations \eqref{e:cd3} show that $ s^{CD}_\lambda(x;0)=sp_\lambda(x^{\pm})$ and $s^{CD}_\lambda(x;1)=o_\lambda(x^{\pm})$, i.e., symmetric polynomials $s^{CD}_\lambda(x;\alpha)$ interpolate between characters of type $C$ and $D$. We call $s^{CD}_\lambda(x;\alpha)$ the {\it $CD$-interpolating Schur polynomials}.
 
\appendix
 \section{dual Jacobi-Trudi identity}
 \begin{center}
\sc by Naihuan Jing, Zhijun Li, Xinyu Pan,Danxia Wang and Chang Ye
\end{center}
 \setcounter{equation}{0}
\renewcommand\theequation{A\arabic{equation}}

Abion et al's \cite{AFHS2023} have recently proved the dual Jacobi-Trudi identity for skew odd orthogonal characters $so_{\lambda/\mu}(x^{\pm})$
by the lattice path method. In the above we have used vertex algebraic method to realize the skew odd orthogonal characters $so_{\lambda/\mu}(x^{\pm})$ and derive
various combinatorial structures including the Jacobi-Trudi formula. In this appendix, we give a
vertex operator approach for the dual version as well.

 For partition $\nu=(\nu_1,\dots,\nu_l)$, we introduce the following partition element of $\mathcal{M}$
\begin{align}
|\underline{\nu^{so}}\rangle=U^*_{\nu_1}\cdots U^*_{\nu_l}|0\rangle.
\end{align}
\begin{proposition}\label{pro7}For the generalized partitions $\mu=(\mu_1,\mu_2,\dots,\mu_l)$ and $\lambda=(\lambda_1,\lambda_2,\dots,\lambda_k)$ with $\lambda_1\leq l$, one has that
\begin{align}\label{e:ortho2}
\langle \mu^{so}|\underline{\lambda^{so}}\rangle=(-1)^{|\mu|}\delta_{\lambda^{\prime}\mu},
\end{align}
where some parts of $\lambda$ and $\mu$ can be zeros.
\end{proposition}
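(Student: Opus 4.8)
The plan is to mimic the orthogonality argument used for Proposition~\ref{pro5} (i.e.\ \cite[Th.~2.3]{JLW2024}), but now pairing the bra $\langle\mu^{so}|=\langle 0|U^*_{-\mu_l}\cdots U^*_{-\mu_1}$ against the ket $|\underline{\lambda^{so}}\rangle=U^*_{\lambda_1}\cdots U^*_{\lambda_k}|0\rangle$, which this time is built from the \emph{starred} operators rather than from the $U_i$. The computation is driven entirely by the second relation in \eqref{e:com5}, namely $U^*_iU^*_j=-U^*_{j-1}U^*_{i+1}$, together with the annihilation conditions \eqref{e:ac10}--\eqref{e:ac1}. Concretely, I would move the operators $U^*_{-\mu_r}$ past the operators $U^*_{\lambda_s}$ by repeatedly applying this braiding relation (keeping careful track of the sign $(-1)$ produced at each swap), then push everything that annihilates $|0\rangle$ to the right and everything that annihilates $\langle 0|$ to the left.

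The key steps, in order, would be: (1) record that $\langle 0|U^*_n=-\langle 0|U^*_{-n+1}$ and $U^*_{-n}|0\rangle=0$ for $n>0$, so the only surviving terms are those in which the indices pair off in a prescribed way; (2) induct on the number of letters (say on $l$), at each stage isolating the leftmost factor $U^*_{-\mu_1}$ of the bra and commuting it all the way to the right through $U^*_{\lambda_1}\cdots U^*_{\lambda_k}$, producing a telescoping sum of which, after using $\langle 0|U^*_{-\mu_1}=-\langle 0|U^*_{\mu_1+1}$ and the vanishing conditions, exactly one term survives; (3) check that this surviving term forces a relation between $\mu_1$ and the conjugate of $\lambda$ — specifically that a nonzero pairing requires $\mu_1=\lambda'_1$, i.e.\ $\mu_1$ equals the number of parts of $\lambda$ (this is where the hypothesis $\lambda_1\le l$ enters, guaranteeing the lengths are compatible so the induction can proceed); (4) peel off that letter, observe that the residual expression is of the same form with $\lambda$ replaced by the partition obtained by deleting its first column and $\mu$ replaced by $(\mu_2,\dots,\mu_l)$, and that the accumulated sign is $(-1)^{\mu_1}$; (5) iterate, so that the total sign is $(-1)^{\mu_1+\mu_2+\cdots}=(-1)^{|\mu|}$ and the Kronecker delta $\delta_{\lambda'\mu}$ assembles column by column, with the base case $\langle 0|0\rangle=1$ (allowing trailing zero parts, as in Proposition~\ref{pro5}).

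The main obstacle is bookkeeping rather than conceptual: one must verify that the braiding relation $U^*_iU^*_j=-U^*_{j-1}U^*_{i+1}$ combined with the two one-sided vanishing conditions really does collapse the multiple sum to a single term, and that the surviving index constraint is precisely conjugation of partitions (not, say, an off-by-one shifted version). A clean way to handle this is to introduce the ``shifted'' variables $\hat\mu_i=\mu_i+(l-i)$ or the analogous shift on the $\lambda$ side, under which the relations become the anticommutation relations of a free fermion; then $|\underline{\lambda^{so}}\rangle$ and $\langle\mu^{so}|$ become semi-infinite wedges, and the pairing is the standard fermionic inner product whose nonvanishing is exactly the conjugacy condition, with the sign $(-1)^{|\mu|}$ coming from reordering the wedge factors. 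I would present the argument directly in terms of the $U^*_i$ (to stay self-contained and parallel to \cite{JLW2024}), but would keep the fermionic picture in mind as the guide to why the answer takes the stated form. Finally I would remark that \eqref{e:ortho2} is the dual-side analogue of \eqref{e:ortho1} and is exactly what is needed to expand $\langle 0|\Gamma_-(x)\cdots$ in the basis $\{|\underline{\lambda^{so}}\rangle\}$ en route to the dual Jacobi--Trudi identity.
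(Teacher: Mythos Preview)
Your primary inductive plan has a concrete gap. The relation $U^*_iU^*_j=-U^*_{j-1}U^*_{i+1}$ in \eqref{e:com5} is deterministic: it produces exactly one term, so there is no ``telescoping sum'' (you may be thinking of the mixed relation $U_iU^*_j+U^*_{j+1}U_{i+1}=\delta_{i,j}$, which is not what is in play here). Carrying out your step (2) literally, commuting $U^*_{-\mu_1}$ past the $k$ factors $U^*_{\lambda_1}\cdots U^*_{\lambda_k}$ yields the single expression
\[
(-1)^k\,\langle 0|U^*_{-\mu_l}\cdots U^*_{-\mu_2}\,U^*_{\lambda_1-1}\cdots U^*_{\lambda_k-1}\,U^*_{k-\mu_1}|0\rangle.
\]
This closes the induction only when $\mu_1\ge k$: if $\mu_1>k$ the rightmost factor kills $|0\rangle$, and if $\mu_1=k$ one strips $U^*_0|0\rangle=|0\rangle$ and is left with the column-deleted problem. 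But when $\mu_1<k$ (with all $\lambda_j\ge 1$, say) the residual ket has $k+1$ letters whose index sequence $(\lambda_1-1,\dots,\lambda_k-1,\,k-\mu_1)$ need not be weakly decreasing, so your induction hypothesis does not apply; to proceed you are forced to straighten globally, which defeats the purpose of the column-by-column scheme. (Also, $U^*_{-\mu_1}$ is the \emph{rightmost} factor of the bra, not the leftmost, so the boundary relation $\langle 0|U^*_{-\mu_1}=-\langle 0|U^*_{\mu_1+1}$ cannot be invoked at that position.)

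The paper's proof is precisely the global straightening you defer to in your ``fermionic'' fallback. One observes that $\langle 0|U^*_{-\mu_l}\cdots U^*_{-\mu_1}U^*_{\lambda_1}\cdots U^*_{\lambda_k}|0\rangle$ is nonzero iff the full string can be brought to $\langle 0|(U^*_0)^{l+k}|0\rangle$ via the $S_{l+k}$-action of \eqref{e:so9}; the existence of such a $\sigma$ translates into the requirement that the two shifted index sets $\{k+l-\mu_l,\dots,k+1-\mu_1\}$ and $\{\lambda_1+k,\dots,\lambda_k+1\}$ partition $\{1,\dots,k+l\}$. This is exactly Macdonald's characterization \cite[(1.7)]{Mac1995} of conjugate partitions (recorded as \cite[(2.22)]{JL2022}), whence $\mu=\lambda'$, and the sign of that permutation is $(-1)^{|\mu|}$ by the same reference. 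So your instinct that the ``shifted variables/semi-infinite wedge'' picture explains the answer is correct, and that picture \emph{is} the paper's argument; but the column-peeling induction as you outlined it does not stand on its own.
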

\begin{proof} It follows from \eqref{e:ortho1} that
\begin{align*}
\langle \mu^{so}|\underline{\lambda^{so}}\rangle=\langle 0|U^*_{-\mu_l}\cdots U^*_{-\mu_1}U^*_{\lambda_1}\cdots U^*_{\lambda_k}|0\rangle
\end{align*}
is nonzero unless there exists a permutation $\sigma\in S_{l+k}$ such that
\begin{align}
\langle 0|U^*_{-\mu_l}\cdots U^*_{-\mu_1}U^*_{\lambda_l}\cdots U^*_{\lambda_k}=\varepsilon(\sigma)\langle 0|\underbrace{U^*_0\cdots U^*_0}_{l+k}.
\end{align}
Using the relation \eqref{e:so9}, we have
\begin{equation}
\begin{aligned}\label{e:aa1}
&\sigma(i)-i=-\mu_{i-k}, ~~~~~~~~~~k+1\leq i\leq k+l\\
&\sigma(i)-i=\lambda_{k+1-i},~~~~~~~~~1\leq i\leq k,
\end{aligned}
\end{equation}
where we used the fact $2(l+k)+1-\sigma(i)-i> l$ for $1\leq i\leq k$. In fact, \eqref{e:aa1} is equivalent to the bijection \cite[(2.22)]{JL2022}
\begin{align}
\{k+l-\mu_l,\dots,k+1-\mu_1;\lambda_1+k,\dots,\lambda_k+1\}\stackrel{\sigma}{\leftrightarrow} \{1,\dots,k+l\}.
\end{align}
From the proof of \cite[Proposition 2.3]{JL2022}, we know that $\lambda^{\prime}=\mu$ and $\varepsilon(\sigma)=(-1)^{|\mu|}$, which have completed the proof.
\end{proof}
\begin{proposition}For $x^{\pm}=(x^{\pm}_1,\dots,x^{\pm}_N)$ and partition $\eta$ with $\eta_1\leq N$, one has that
\begin{align}
\langle 0|\Gamma_+(x^{\pm})|\underline{\eta^{so}}\rangle=(-1)^{|\eta|}so_{\eta^{\prime}}(x^{\pm}).
\end{align}
\end{proposition}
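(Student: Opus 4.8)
The plan is to compute $\langle 0|\Gamma_+(x^{\pm})|\underline{\eta^{so}}\rangle$ directly by inserting a resolution of the identity and invoking the orthogonality relation \eqref{e:ortho2} from Proposition \ref{pro7}. First I would recall from \eqref{e:so12a} that $\langle 0|\Gamma_+(x^{\pm})=\sum_{\nu}\langle\nu^{so}|\,so_\nu(x^{\pm})$, where the sum runs over generalized partitions $\nu=(\nu_1,\dots,\nu_N)$ of length $N$ (the number of variable pairs). Pairing this expansion with $|\underline{\eta^{so}}\rangle=U^*_{\eta_1}\cdots U^*_{\eta_k}|0\rangle$ gives
\begin{align*}
\langle 0|\Gamma_+(x^{\pm})|\underline{\eta^{so}}\rangle=\sum_{\nu=(\nu_1,\dots,\nu_N)}so_\nu(x^{\pm})\,\langle\nu^{so}|\underline{\eta^{so}}\rangle.
\end{align*}
Now Proposition \ref{pro7}, applied with $l=N$ and $\lambda=\eta$, tells us that $\langle\nu^{so}|\underline{\eta^{so}}\rangle=(-1)^{|\nu|}\delta_{\eta',\nu}$, provided the hypothesis $\eta_1\leq N$ holds — which is exactly the standing assumption. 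Since $|\eta'|=|\eta|$, only the single term $\nu=\eta'$ survives and we obtain $\langle 0|\Gamma_+(x^{\pm})|\underline{\eta^{so}}\rangle=(-1)^{|\eta|}so_{\eta'}(x^{\pm})$, as claimed.

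The only subtlety worth flagging is the length bookkeeping: to apply Proposition \ref{pro7} one needs $\nu$ and $\eta'$ to have a common length $N$, so I would pad $\eta'$ with trailing zeros to length $N$ (legitimate since $\eta_1\leq N$ forces $l(\eta')=\eta_1\leq N$) and note, as the paper already emphasizes after Proposition \ref{pro5}, that $|\nu^{so}\rangle$ depends only on its nonzero parts while its dual $\langle\nu^{so}|$ in the completed space $\widetilde{\mathcal M^*}$ is pinned down once the length is fixed; this is precisely the setting in which the orthogonality $\langle\nu^{so}|\underline{\eta^{so}}\rangle=(-1)^{|\nu|}\delta_{\eta',\nu}$ is valid. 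There is no real obstacle here — the statement is an immediate corollary of the two preceding results — so the ``hard part'' is merely ensuring the conventions on lengths and zero parts line up, which the excerpt has already set up carefully. One could alternatively give a self-contained proof mirroring the argument for Proposition \ref{pro6}: expand $U^*(z)$ past $\Gamma_+(x^{\pm})$ using $U^*_{-n}\Gamma_+(x^{\pm})=\Gamma_+(x^{\pm})\sum_{i\geq 0}h_i(x^{\pm})U^*_{-n-i}$, collect the resulting determinant, and recognize it as the dual Jacobi--Trudi expression for $so_{\eta'}(x^{\pm})$; but routing through Propositions \ref{pro7} and the already-established expansion \eqref{e:so12a} is cleaner and is the route I would take.
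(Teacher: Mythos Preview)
Your proof is correct and follows exactly the paper's approach: expand $\langle 0|\Gamma_+(x^{\pm})$ via \eqref{e:so12a} and then apply the orthogonality \eqref{e:ortho2} from Proposition \ref{pro7} to pick out the single term $\nu=\eta'$. Your extra care with the length bookkeeping is a welcome elaboration of what the paper leaves implicit in its two-line argument.
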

\begin{proof}By \eqref{e:so12a} and \eqref{e:ortho2}, we have
\begin{align*}
\langle 0|\Gamma_+(x^{\pm})|\underline{\eta^{so}}\rangle=&\sum_{\nu}so_\nu(x^{\pm})\langle \nu^{so}||\underline{\eta^{so}}\rangle\\
=&(-1)^{|\eta|}so_{\eta^{\prime}}(x^{\pm}).
\end{align*}
\end{proof}
For $l(\lambda)\leq l+N$ and $y^{\pm}=(y^{\pm}_1,\dots,y^{\pm}_l),~x^{\pm}=(x^{\pm}_1,\dots,x^{\pm}_N)$, we have
\begin{equation}
\begin{aligned}
so_\lambda(y^{\pm};x^{\pm})=&(-1)^{|\lambda|}\langle 0|\Gamma_+(y^{\pm};x^{\pm})|\underline{\lambda^{'so}}\rangle\\
=&(-1)^{|\lambda|}\langle 0|\Gamma_+(y^{\pm})\sum_{\mu=(\mu_1,\dots,\mu_l)}|\mu^{so}\rangle\langle \mu^{so}|\Gamma_+(x^{\pm})|\underline{\lambda^{'so}}\rangle\\
=&\sum_{\mu=(\mu_1,\dots,\mu_l)}so_\mu(y^{\pm})(-1)^{|\lambda|}\langle \mu^{so}|\Gamma_+(x^{\pm})|\underline{\lambda^{'so}}\rangle.
\end{aligned}
\end{equation}
In other words, $(-1)^{|\lambda|}\langle \mu^{so}|\Gamma_+(x^{\pm})|\underline{\lambda^{'so}}\rangle$ is another vertex operator realization of skew odd orthogonal character $so_{\lambda/\mu}(x^{\pm})$.
\begin{theorem}For partitions $\mu=(\mu_1,\dots,\mu_l)$ and $\lambda$ with $l(\lambda)\leq l+N$ and $\lambda_1\leq s$, one has
\begin{align}
so_{\lambda/\mu}(x^{\pm})=\det\left(e_{\lambda^{\prime}_i-\mu^{\prime}_j-i+j}(x^{\pm})+e_{\lambda^{\prime}_i+\mu^{\prime}_j-i-j-2l+1}(x^{\pm})\right)^{s}_{i,j=1}.
\end{align}
\end{theorem}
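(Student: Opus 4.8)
The plan is to follow the method of Proposition~\ref{pro6}, but to replace $|\lambda^{so}\rangle$ by the partition element $|\underline{\lambda^{'so}}\rangle$ and to start from the realization $so_{\lambda/\mu}(x^{\pm})=(-1)^{|\lambda|}\langle\mu^{so}|\Gamma_+(x^{\pm})|\underline{\lambda^{'so}}\rangle$ established just above the theorem (pad $\lambda'$ and $\mu'$ with zeros to length $s$; the hypothesis $\lambda_1\le s$ lets us write $|\underline{\lambda^{'so}}\rangle$ as a product of $s$ factors $U^*$, the extra ones being $U^*_0$). The crucial feature is that commuting $\Gamma_+(x^{\pm})$ past $U^*$ produces \emph{elementary} rather than complete homogeneous functions: a one-line BCH computation gives $\Gamma_+(w)U^*(z)=(1-wz)U^*(z)\Gamma_+(w)$, hence
\begin{align*}
\Gamma_+(x^{\pm})U^*(z)=\prod_{i=1}^N(1-x_iz)(1-x_i^{-1}z)\,U^*(z)\Gamma_+(x^{\pm})=\Bigl(\sum_{m\ge0}(-1)^m e_m(x^{\pm})z^m\Bigr)U^*(z)\Gamma_+(x^{\pm}),
\end{align*}
that is, $\Gamma_+(x^{\pm})U^*_n=\sum_{m\ge0}(-1)^m e_m(x^{\pm})\,U^*_{n-m}\Gamma_+(x^{\pm})$.

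First I would push $\Gamma_+(x^{\pm})$ to the right through all $s$ factors and use $\Gamma_+(x^{\pm})|0\rangle=|0\rangle$, obtaining
\begin{align*}
so_{\lambda/\mu}(x^{\pm})=(-1)^{|\lambda|}\sum_{m_1,\dots,m_s\ge0}\Bigl(\prod_{t=1}^s(-1)^{m_t}e_{m_t}(x^{\pm})\Bigr)\langle0|U^*_{-\mu_l}\cdots U^*_{-\mu_1}U^*_{\lambda'_1-m_1}\cdots U^*_{\lambda'_s-m_s}|0\rangle.
\end{align*}
Then I would evaluate $\langle0|U^*_{-\mu_l}\cdots U^*_{-\mu_1}U^*_{n_1}\cdots U^*_{n_s}|0\rangle$ for arbitrary integers $n_t$ by the same mechanism as in Proposition~\ref{pro7}: the relation $U^*_iU^*_j=-U^*_{j-1}U^*_{i+1}$ from \eqref{e:com5} generates an $S_{l+s}$-action permuting the multiset of (index minus position), the boundary relation $\langle0|U^*_n=-\langle0|U^*_{-n+1}$ replaces a position-$1$ value $c$ by $-c-1$ up to a sign, and $U^*_m|0\rangle=\delta_{m,0}|0\rangle$ for $m\le0$. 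The $\mu$-block contributes the negative values $\{\,j-\mu_j-l-1:1\le j\le l\,\}$, which cannot be reflected, so the expectation vanishes unless, after a permutation $\sigma$ and a choice of reflected $\lambda'$-columns, the shifted indices of the $\lambda'$-block fill the complement of that set inside $\{-1,-2,\dots,-(l+s)\}$; by the standard $\beta$-set (Maya diagram) symmetry this complement equals $\{\,\mu'_i-l-i:1\le i\le s\,\}$, which is how the conjugate $\mu'$ enters the formula.

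Matching the $t$-th $\lambda'$-index with the $\pi(t)$-th value of this complement forces $m_t=\lambda'_t-\mu'_{\pi(t)}-t+\pi(t)$ for an unreflected column and $m_t=\lambda'_t+\mu'_{\pi(t)}-t-\pi(t)-2l+1$ for a reflected one, which are precisely the two summands of the $(t,\pi(t))$ entry of the asserted determinant; moreover the straightening permutation $\sigma$ depends only on $\pi$, not on which columns are reflected, since a reflection exactly undoes the change of value. The two admissible values of $m_t$ differ by an odd integer, so the column-wise product of the commutation sign $(-1)^{m_t}$ with the reflection sign is invariant under toggling a column; hence this product is the global constant $(-1)^{|\lambda'|-|\mu'|}=(-1)^{|\lambda|+|\mu|}$, and combining it with the prefactor $(-1)^{|\lambda|}$ and with $\varepsilon(\sigma)=(-1)^{|\mu|}\varepsilon(\pi)$ everything collapses to $\sum_{\pi\in S_s}\varepsilon(\pi)\prod_t\bigl(e_{\lambda'_t-\mu'_{\pi(t)}-t+\pi(t)}(x^{\pm})+e_{\lambda'_t+\mu'_{\pi(t)}-t-\pi(t)-2l+1}(x^{\pm})\bigr)$, i.e.\ the claimed determinant.

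The hard part is the sign bookkeeping in this last step: one must check that the $\beta$-set complementation identifies the relevant riffle permutation with sign $(-1)^{|\mu|}$, so that $\varepsilon(\sigma)=(-1)^{|\mu|}\varepsilon(\pi)$, and that after all cancellations the two elementary-function terms enter each entry with a $+$ sign and with no residual powers of $-1$. For the normalization of the vacuum expectation $\langle0|\cdots|0\rangle$ one can invoke the bijection of \cite[Proposition 2.3]{JL2022} already used in the proof of Proposition~\ref{pro7}; the rest is the same type of manipulation as in Propositions~\ref{pro6} and \ref{pro7}, and the resulting identity reproves the dual Jacobi--Trudi formula of \cite{AFHS2023}.
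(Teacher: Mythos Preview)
Your proposal is correct and takes essentially the same route as the paper: commute $\Gamma_+(x^{\pm})$ past the $U^*$ factors to produce elementary symmetric functions, straighten the resulting vacuum expectation via \eqref{e:so9} and the $\beta$-set complementation of $\mu$ (the paper cites \cite[(1.7)]{Mac1995} for this, you invoke Maya diagrams), and identify the unreflected/reflected choices for each $m_t$ with the two terms in the $(t,\pi(t))$ entry of the determinant. Your sign bookkeeping is in fact more explicit than the paper's, which simply records the straightening sign as $\epsilon=(-1)^{|\mu|+\tau}$ and refers back to Proposition~\ref{pro7} without spelling out the parity argument.
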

\begin{proof} Recalling the definition of vertex operator $U^*(z)$ and $\Gamma_+(x^{\pm})$, we have that
\begin{align*}
\Gamma_+(x^{\pm})U^*(z)=\prod^N_{i=1}(1-x_iz)(1-x^{-1}_iz)U^*(z)\Gamma_+(x^{\pm}),
\end{align*}
i.e., $
\Gamma_+(x^{\pm})U^*_n=\sum^\infty_{i=0}(-1)^ie_i(x^{\pm})U^*_{n-i}\Gamma_+(x^{\pm})$,
where $e_i(x^{\pm})$ is the elementary symmetric function defined by the generating series
 $\prod^N_{i=1}(1+x_iz)(1+x^{-1}_iz)=\sum^\infty_{i=0}e_i(x^{\pm})z^i$. We thus have
\begin{align}
\notag\langle \mu^{so}|\Gamma_+(x^{\pm})|\underline{\lambda^{'so}}\rangle=&\langle \mu^{so}|\Gamma_+(x^{\pm})U^*_{\lambda^{\prime}_1}\cdots U^*_{\lambda^{\prime}_s}|0\rangle\\
=&\sum_{i_1,\dots,i_s\geq 0}(-1)^{i_1+\dots+i_s}e_{i_1}(x^{\pm})\cdots e_{i_s}(x^{\pm})\langle 0|U^*_{-\mu_l}\cdots U^*_{-\mu_1}U^*_{\lambda^{\prime}_1-i_1}\cdots U^*_{\lambda^{\prime}_s-i_s}|0\rangle,
\end{align}
here we have used the fact $\Gamma_+(x^{\pm})|0\rangle=|0\rangle$. The orthogonality relation \eqref{e:ortho1} tells us that
$$\langle 0|U^*_{-\mu_l}\cdots U^*_{-\mu_1}U^*_{\lambda^{\prime}_1-i_1}\cdots U^*_{\lambda^{\prime}_s-i_s}|0\rangle$$ is zero unless 
\begin{align*}
\langle 0|U^*_{-\mu_l}\cdots U^*_{-\mu_1}U^*_{\lambda^{\prime}_1-i_1}\cdots U^*_{\lambda^{\prime}_s-i_s}=\epsilon\langle 0|\underbrace{U^*_0\cdots U^*_0}_{l+s}
\end{align*}
up to sign.
Then by \eqref{e:so9}, we have
\begin{equation}\label{e:A1}
\left\{\begin{aligned}
&-\mu_i=\sigma(s+i)-(s+i)~~&~~1\leq i\leq l,\\
&\lambda^{\prime}_j-i_j=\sigma(s+1-j)-(s+1-j)~\&~-\sigma(s+1-j)+2l+s+j~~~&~~1\leq j\leq s.
\end{aligned}
\right.
\end{equation}
\eqref{e:A1} is equivalent to
\begin{equation}\label{e:A2}
\left\{\begin{aligned}
&l+s+1-\sigma(s+i)=\mu_i+l-i+1~~&~~1\leq i\leq l,\\
&l+s+1-\sigma(s+1-j)=-\lambda^{\prime}_j+i_j+l+j~\&~\lambda^{\prime}_j-i_j-l-1-j~~~&~~1\leq j\leq s,
\end{aligned}
\right.
\end{equation}
where the left of \eqref{e:A2} is a permutation of $\{1,\dots,l+s\}$. By \cite[(1.7)]{Mac1995},
\begin{align*}
\mu_i+l-i+1 ~~(1\leq i\leq l),~~~~~~~~l-1+k-\mu^{\prime}_k~~(1\leq k\leq s)
\end{align*}
is also a permutation of $\{1,\dots,l+s\}$, so there exists $\tau\in S_s$ such that
\begin{align*}
l-1+\tau(j)-\mu^{\prime}_{\tau(j)}=-\lambda^{\prime}_j+i_j+l+j~\&~\lambda^{\prime}_j-i_j-l-1-j~~~&~~1\leq j\leq s,
\end{align*}
i.e.,
\begin{align*}
i_j=\lambda^{\prime}_j-\mu^{\prime}_{\tau(j)}+\tau(j)-j~\&~\lambda^{\prime}_j+\mu^{\prime}_{\tau(j)}-\tau(j)-j-2l+1~~~&~~1\leq j\leq s.
\end{align*}
In addition, $\epsilon=(-1)^{|\mu|+\tau}$ from the proof of Proposition \ref{pro7}. Then using \eqref{e:so9} we have proved the theorem.
\end{proof}

\end{document}